\newtheorem{thm}{Theorem}[section]
\newtheorem{cor}[thm]{Corollary}
\newtheorem{lem}[thm]{Lemma}
\newtheorem{prop}[thm]{Proposition}
\newtheorem{rem}{Remark}
\numberwithin{equation}{section}
\newcommand{\ve}{\varepsilon}
\newcommand{\gfls}{(-\Delta_g)^s}
\newcommand{\R}{\mathbb{R}}
\newcommand{\N}{\mathbb{N}}
\newcommand{\F}{\mathcal{F}}
\newcommand{\J}{\mathcal{J}}
\newcommand{\tJ}{\widetilde{\mathcal{J}}}
\newcommand{\lam}{\lambda}
\def\d{\mathbf {d}}
\newcommand{\cde}{\stackrel{*}{\rightharpoonup}}
\newcommand{\cd}{\rightharpoonup}
\newcommand{\average}{{\mathchoice {\kern1ex\vcenter{\hrule height.4pt
width 6pt depth0pt} \kern-9.7pt} {\kern1ex\vcenter{\hrule
height.4pt width 4.3pt depth0pt} \kern-7pt} {} {} }}
\def\R{\mathbb{R}}
\begin{document}

\title[Fractional eigenvalues in Orlicz spaces with no $\Delta_2$ condition]
{Fractional eigenvalues in Orlicz spaces with no $\Delta_2$ condition}

\author{Ariel Salort}
\address{Instituto de Calculo (UBA - CONICET) and Departamento de Matematica, FCEyN, Universidad de Buenos Aires, Pabellon I, Ciudad Universitaria (1428), Buenos Aires, Argentina. }

\email{asalort@dm.uba.ar}

\author{Hern\'an Vivas}
\address{Centro Marplatense de Investigaciones Matem\'aticas (UNMDP-CIC)}
\address{Departamento de Matem\'atica, FCEYN, UNMDP \hfill\break \indent De\'an Funes 3350, 7600, Mar del Plata, Argentina}
\email{havivas@mdp.edu.ar}

\keywords{Orlicz spaces, Fractional partial differential equations, Nonlinear eigenvalue problems.}

\subjclass[2010]{46E30, 35R11, 35P30}

\begin{abstract}
We study the eigenvalue problem for the $g-$Laplacian operator in fractional order Orlicz-Sobolev spaces, where $g=G'$ and neither $G$ nor its conjugated function satisfy the $\Delta_2$ condition. Our main result is the existence of a nontrivial solution to such a problem; this is achieved by first showing that the corresponding minimization problem has a solution and then applying a generalized Lagrange multiplier theorem to get the existence of an eigenvalue. Further, we prove closedness of the spectrum and some properties of the eigenvalues and, as an application, we show existence for a class of nonlinear eigenvalue problems.
\end{abstract}

\maketitle
\tableofcontents

\section{Introduction and main results} \label{sec.intro}

The main goal of this article is to study the eigenvalue problem
\begin{equation}\label{eq.eigen}
\left\{ \begin{array}{cccc}
\gfls u  & = & \lambda g\left(|u|\right)\frac{u}{|u|} &  \textrm{ in }\Omega \\
u & = & 0  &\textrm{ in }\R^n\setminus\Omega,
\end{array} \right.
\end{equation}
where $\lambda\in\R$ and $\Omega$ is a bounded open subset of $\R^n$ with Lipschitz boundary. Here $\gfls$ is the fractional $g-$Laplacian defined in \cite{FBS}: 
\begin{equation}\label{eq.gfls}
\gfls u(x):=\textrm{p.v.}\int_{\R^n}g(|D_su|)\frac{D_su}{|D_su|}\frac{dy}{|x-y|^{n+s}}
\end{equation}
with $g=G'$ the derivative of a Young function  $G$, the quantity $D_s u(x,y):=\frac{u(x)-u(y)}{|x-y|^s}$ denotes the $s-$H\"older quotient and $d\mu:=\frac{dxdy}{|x-y|^n}$.

Eigenvalue problems for homogeneous operators of second order, the archetypal example being the $p$-Laplacian, have been widely studied and are by now fairly well understood, see \cite{AT,L}, or \cite{FBS2,Ka} for more general homogeneous operators. Non-homogeneous problems in Orlicz spaces, i.e. the (local) second order analog of \eqref{eq.eigen} have also been studied thoroughly, see for instance Gossez and Man\'asevich \cite{GM}, Tienari \cite{T}, Garc\'ia-Huidobro et al \cite{GLMS} and Mustonen and Tienari \cite{MT}, these last two references being a primary source of motivation for our present work.

On the other hand, the recent years have seen a tremendous development of the theory of nonlocal (or integro-differential) operators; such operators arise naturally in the context of stochastic L\'evy processes with jumps and have been studied thoroughly both from the point of view of Probability and Analysis as they proved to be accurate models to describe different phenomena in Physics, Finance, Image processing, or Ecology; see for instance \cite{Ap04,CT16,ST94} and references therein. For the mathematical background from the PDE perspective taken in this paper, see for instance \cite{BV} or \cite{Ga}.

The most canonical and important example of nonlocal operator is given by the \emph{fractional Laplacian}:
\[
(-\Delta)^s u(x) = \textrm{p.v.}\int_{\R^n} \frac{u(x)-u(y)}{|x-y|^{n+2s}}\, dy
\]
which can be obtained by minimizing the Gagliardo seminorm of the fracional Sobolev space $H^s$, $s\in(0,1)$ (see for instance \cite{DNPV}) or alternatively from the harmonic extension problem to the upper half space as the ``Dirichlet-to-Neumann'' operator, as pointed out in the celebrated paper of Caffarelli and Silvestre \cite{CS}. From this point of view, it is important to point out that when $G(t)=\frac{t^2}{2}$ and hence $g(t)=t$, then $\gfls$ becomes (a multiple of) $(-\Delta)^s$. More generally, when $G(t)=\frac{t^p}{p}$ in \eqref{eq.eigen}, $1<p<\infty$, we get the eigenvalue problem for the so called \emph{fractional $p$-Laplacian}
\[
(-\Delta_p)^s u(x) = \textrm{p.v.}\int_{\R^n} \frac{|u(x)-u(y)|^{p-2}(u(x)-u(y))}{|x-y|^{n+ps}}dy.
\]
Both of these problems have been studied recently, see for instance  \cite{BP, CL,DPS,FP, Kw, LL,SV} or the book \cite{BRS} and references therein.

A theory for eigenvalue problems in the context of fractional Orlicz-Sobolev spaces is still under development. The first author, together with  Fern\'andez Bonder, worked out the necessary functional setting for these type of problems in \cite{FBS}, and the eigenvalue problem was addressed very recently by the first author under the assumption of the $\Delta_2$, or doubling, condition (see Equation \eqref{eq.delta2} below) holds, see \cite{S}. However, a satisfactory result in the general setting was still lacking and is the main concern of this manuscript.   

It is worth mentioning that the lack of the doubling property, in general, carries a loss of reflexivity of the corresponding fractional Orlicz-Sobolev spaces. Its absence adds to the complications arising from the non-homogeneous nature of problem \eqref{eq.eigen}, making its solution a challenging task. 

\medskip

One way to look for (nontrivial) solutions to \eqref{eq.eigen} is to consider the constrained minimization problem
\begin{equation}\label{eq.min}
\min\left\{\F(u):u\in W^s_0L^G(\Omega),\,\J(u)=\alpha\right\},
\end{equation}
where   $W^s_0 L^G(\Omega)$ is a suitable fractional Orlicz-Sobolev space (see section \ref{sec.sobolev}) and
\begin{equation}\label{eq.modulars}
\F(u):=\iint_{\R^n\times\R^n} G(|D_su(x,y)|)\,d\mu,\quad\J(u):=\int_{\Omega}G(|u|)\,dx.
\end{equation}

Indeed, if we can find a minimizer $u_\alpha\in W^s_0L^G(\Omega)$ of \eqref{eq.min} then we may use the Lagrange multipliers method to ensure the existence of an eigenvalue $\lambda_\alpha$ such that \eqref{eq.eigen} is fulfilled by $u_\alpha$ in an appropriated weak sense. If $G$ satisfies the $\Delta_2$ condition, the Lagrange multipliers method applies directly as $\F$ and $\J$ are Fr\'echet differentiable in this case, see \cite{S}. See also \cite{BS} for a the Neumann case.  In this paper, by adapting the strategy of \cite{MT}, we get the existence of solutions of \eqref{eq.eigen} \emph{without assuming  the $\Delta_2$ condition neither on $G$ nor on its conjugated function.}

\subsection{Main results}

\medskip
We first prove that the constrained minimization problem \eqref{eq.min} has a solution for each energy level $\alpha>0$. Note that, since the functionals $\F$ and $\J$ are in general not homogeneous, minimizers strongly depend on the energy level $\alpha$. In all the following results $\Omega\subset \R^n$ is an open and bounded domain with Lipschitz boundary, $s\in(0,1)$ is a fractional parameter and $G$ stands for a Young function fulfilling the following structural conditions (see Section \ref{ssec.comp} for details)
$$
\int_1^\infty \left( \frac{t}{G(t)} \right)^\frac{s}{n-s}\,dt = \infty, \qquad \int_0^1 \left( \frac{t}{G(t)} \right)^\frac{s}{n-s}\,dt <\infty.
$$

\begin{thm}\label{thm.min}
For each $\alpha> 0$  there exists $u_\alpha\in  W^s_0L_G(\Omega)$, one-signed in $\Omega$, such that 
\[
\Lambda_\alpha:=\F(u_\alpha)=\min\left\{\F(u):u\in W^s_0L^G(\Omega),\:\J(u)=\alpha\right\}.
\]
\end{thm}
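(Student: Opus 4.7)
The plan is to apply the direct method in the calculus of variations, the main difficulty being that $W_0^s L^G(\Omega)$ is not reflexive in general, so the routine weak-compactness step must be replaced by a combination of almost-everywhere convergence and an equi-integrability argument. First observe that the constraint set $M_\alpha := \{u \in W_0^s L^G(\Omega):\mathcal{J}(u)=\alpha\}$ is nonempty: for any $\varphi\in C_c^\infty(\Omega)$, $\varphi\not\equiv 0$, the map $t\mapsto \mathcal{J}(t\varphi)$ is continuous, vanishes at $0$ and tends to $+\infty$ by convexity and superlinearity of $G$, so some rescaling lies in $M_\alpha$. Write $\Lambda_\alpha := \inf_{M_\alpha}\mathcal{F}\geq 0$ and fix a minimizing sequence $\{u_k\}\subset M_\alpha$. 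Since $||u(x)|-|u(y)||\leq |u(x)-u(y)|$ and $G$ is nondecreasing on $[0,\infty)$, one has $\mathcal{F}(|u_k|)\leq \mathcal{F}(u_k)$ and $\mathcal{J}(|u_k|)=\mathcal{J}(u_k)$, so we may assume $u_k\geq 0$.

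The bounds $\mathcal{F}(u_k)\leq \Lambda_\alpha+1$ and $\mathcal{J}(u_k)=\alpha$ translate, via the standard modular/Luxemburg-norm comparison for Young functions, into a uniform bound for $\{u_k\}$ in $W_0^s L^G(\Omega)$. The fractional Sobolev--Orlicz compact embedding from \cite{FBS}, available under the stated integrability conditions on $G$, then furnishes a subsequence (not relabeled) with $u_k\to u\geq 0$ in $L^G(\Omega)$ and a.e.\ on $\Omega$; a further diagonal extraction yields $D_s u_k(x,y)\to D_s u(x,y)$ for $\mu$-a.e.\ $(x,y)\in\R^n\times\R^n$. Applied to the nonnegative integrands $G(|D_s u_k|)$ and $G(|u_k|)$, Fatou's lemma gives
\[
\mathcal{F}(u)\leq \liminf_{k\to\infty}\mathcal{F}(u_k) = \Lambda_\alpha, \qquad \mathcal{J}(u)\leq \liminf_{k\to\infty}\mathcal{J}(u_k) = \alpha,
\]
so $u\in W_0^s L^G(\Omega)$ is a candidate.

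The crux, and the principal obstacle, is promoting the second inequality to $\mathcal{J}(u)=\alpha$: this is exactly where the failure of $\Delta_2$ bites, since otherwise strong $L^G$-convergence would deliver modular convergence immediately. My plan is to establish equi-integrability of the family $\{G(|u_k|)\}_k$ in $L^1(\Omega)$, after which Vitali's convergence theorem gives $\mathcal{J}(u_k)\to \mathcal{J}(u)$, and hence $\mathcal{J}(u)=\alpha$. The equi-integrability will come from the de la Vall\'ee Poussin criterion: the hypothesis $\int_1^\infty (t/G(t))^{s/(n-s)}\,dt=\infty$ places $G$ in the subcritical/critical regime of the fractional Sobolev--Orlicz embedding of \cite{FBS}, providing a Young function $H$ with $G\ll H$ (in the sense $G(t)/H(t)\to 0$ as $t\to\infty$) for which $W_0^s L^G(\Omega) \hookrightarrow L^H(\Omega)$ continuously; the resulting uniform bound $\sup_k\int_\Omega H(|u_k|)\,dx<\infty$ yields equi-integrability of $\{G(|u_k|)\}$.

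Combining $\mathcal{J}(u)=\alpha$ with the Fatou bound $\mathcal{F}(u)\leq \Lambda_\alpha$ shows that $u_\alpha:=u$ realizes the minimum and, since $\alpha>0$, is nontrivial; its one-signedness is inherited from the choice $u_k\geq 0$. I expect the equi-integrability/subcritical-embedding step to be the only delicate point, as everything else is a fairly standard adaptation of the direct method.
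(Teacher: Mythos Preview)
Your core strategy is sound and reaches constraint stability by a route different from the paper's, though both rest on the same underlying fact: the continuous embedding $W^s L^G(\Omega)\hookrightarrow L^{G_*}(\Omega)$ together with $G$ growing essentially more slowly than $G_*$ (Proposition~\ref{embed}). You feed this into de~la~Vall\'ee~Poussin/Vitali: from $\|u_k\|_{G_*}\le K$ one has $\int_\Omega G_*(|u_k|/K)\,dx\le 1$ uniformly, and since $G(t)/G_*(t/K)\to 0$ as $t\to\infty$, the family $\{G(|u_k|)\}$ is equi-integrable, whence $\J(u_k)\to\J(u)$. (One caution: a norm bound $\|u_k\|_H\le C$ does \emph{not} by itself give $\sup_k\int_\Omega H(|u_k|)<\infty$ when $H$ fails $\Delta_2$; you must pass to the rescaled Young function $t\mapsto H(t/C)$ as above.) The paper instead writes
\[
|\J(u_k)-\J(u)|\le 2\,\|g(|u|+|u_k-u|)\|_{\tilde G}\,\|u_k-u\|_G
\]
and bounds the first factor uniformly via Lemmas~\ref{lema.aux.1}--\ref{lema.aux}, which again hinge on the $L^{G_*}$ embedding. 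Your Vitali route is cleaner for this particular step; the paper's H\"older route makes the role of $g$ explicit and produces auxiliary lemmas reused later (e.g.\ in Section~\ref{sec.lag}). The Fatou argument for lower semicontinuity of $\F$ is the same in both.

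There is, however, one genuine gap. You deliberately avoid any compactness step in $W^s_0 L^G(\Omega)$ itself, extracting $u$ only as an $L^G$/a.e.\ limit, and then assert that ``$u\in W^s_0 L^G(\Omega)$ is a candidate.'' But Fatou only gives $u\in W^s L^G(\R^n)$ with $u=0$ outside $\Omega$; membership in $W^s_0 L^G(\Omega)$---defined here as the \emph{weak* closure} of $C_c^\infty(\Omega)$---does not follow without further argument. The paper resolves this by observing that $W^s_0 L^G(\Omega)$ is a weak*-closed subspace of the dual of the separable space $E^{\tilde G}(\Omega)\times E^{\tilde G}(\R^n\times\R^n,d\mu)$, so Banach--Alaoglu furnishes a subsequence with $u_k\cde u\in W^s_0 L^G(\Omega)$. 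Weak* sequential compactness is thus \emph{available} despite non-reflexivity, and your opening remark that it ``must be replaced'' is a misconception. Adding this one line closes the gap; the rest of your argument then goes through.
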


Once the existence of minimizers is achieved, we are lead to the following existence result for eigenvalues.
\begin{thm}\label{thm.wsol}
Let $u_\alpha\in W^s_0L^G(\Omega)$ be a solution of \eqref{eq.min}. Then there exists $\lambda_\alpha>0$ for which $u_\alpha$ is a weak solution of \eqref{eq.eigen}, i.e. 
\[
\iint_{\R^n\times\R^n} g\left(|D_su_\alpha|\right)\frac{D_su_\alpha}{|D_su_\alpha|}D_sv\,d\mu=\lambda_\alpha \int_{\Omega}g\left(|u_\alpha|\right)\frac{u_\alpha}{|u_\alpha|}v\,dx 
\]
for any $v\in  W^s_0L^G(\Omega)$.
\end{thm}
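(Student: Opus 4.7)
My plan is to recover the Euler--Lagrange identity for $u_\alpha$ via a Lagrange multiplier argument, bypassing the absence of Fr\'echet differentiability of $\F$ and $\J$ (due to the failure of the $\Delta_2$ condition) by performing the computation along directions smooth enough to make $\F$ and $\J$ differentiable, and then propagating the resulting identity to all of $W^s_0L^G(\Omega)$ by a density and duality argument.

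First, I would establish G\^ateaux differentiability of $\F$ and $\J$ at $u_\alpha$ in every direction $\phi\in C_c^\infty(\Omega)$: pointwise, the difference quotient of $G(|D_s u_\alpha+tD_s\phi|)$ converges $d\mu$-a.e.\ to $g(|D_su_\alpha|)\frac{D_su_\alpha}{|D_su_\alpha|}D_s\phi$, and the estimate $|D_s\phi(x,y)|\le C_\phi\min\{|x-y|^{1-s},|x-y|^{-s}\}$ (valid since $\phi$ is Lipschitz and compactly supported) combined with convexity of $G$ furnishes an integrable dominant. The same idea gives $\J'(u_\alpha)\phi=\int_\Omega g(|u_\alpha|)\frac{u_\alpha}{|u_\alpha|}\phi\,dx$. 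Since $u_\alpha$ is one-signed and not identically zero I can fix $w\in C_c^\infty(\Omega)$ with $\J'(u_\alpha)w\ne 0$; for arbitrary $\phi\in C_c^\infty(\Omega)$ the one-dimensional implicit function theorem produces a $C^1$ curve $s(t)$ with $s(0)=0$ such that $\J(u_\alpha+t\phi+s(t)w)=\alpha$, and the minimality of $u_\alpha$ forces
\[
\F'(u_\alpha)\phi=\lambda_\alpha\,\J'(u_\alpha)\phi,\qquad \lambda_\alpha:=\frac{\F'(u_\alpha)w}{\J'(u_\alpha)w},
\]
with $\lambda_\alpha$ manifestly independent of $\phi$.

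Next, the identity is propagated to every $v\in W^s_0L^G(\Omega)$ by truncation $v_k=(-k)\vee v\wedge k$ followed by smooth mollification, using continuity of both bilinear forms in the $L^{\widetilde G}$--$L^G$ duality pairing; this last point rests on the fact that $u_\alpha\in W^s_0L^G(\Omega)$ forces $g(|D_s u_\alpha|)\in L^{\widetilde G}$, so Young's inequality controls the passage to the limit. Finally, testing the identity against $v=u_\alpha$ (again via a suitable approximation) yields
\[
\iint_{\R^n\times\R^n}g(|D_s u_\alpha|)|D_s u_\alpha|\,d\mu=\lambda_\alpha\int_\Omega g(|u_\alpha|)|u_\alpha|\,dx,
\]
and since both integrals are strictly positive when $u_\alpha\not\equiv 0$, we conclude $\lambda_\alpha>0$.

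The main obstacle is precisely the extension step: because $G$ does not satisfy $\Delta_2$, $C_c^\infty(\Omega)$ need not be norm-dense in $W^s_0L^G(\Omega)$, so the limit must be taken in the weaker modular topology. Controlling the truncation-and-mollification remainders in the Orlicz duality pairing, while simultaneously ensuring that $g(|D_su_\alpha|)$ has enough integrability against $d\mu$ to make the left-hand bilinear form continuous on $W^s_0L^G(\Omega)$, is where the delicate Orlicz-space estimates --- ultimately encoded in Young's inequality and the structural hypotheses on $G$ --- must be deployed.
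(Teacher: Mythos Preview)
Your strategy --- derive the Euler--Lagrange identity along smooth directions via a constrained-variation argument, then extend by density --- is the paper's approach. The paper uses the perturbation $(1-\varepsilon)u_\alpha+\delta(\varepsilon)v$ rather than $u_\alpha+t\phi+s(t)w$ and invokes a kernel-inclusion criterion from Zeidler instead of the implicit function theorem directly, but these are cosmetic differences.

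There is, however, a genuine gap. You assert that ``$u_\alpha\in W^s_0L^G(\Omega)$ forces $g(|D_su_\alpha|)\in L^{\widetilde G}$''. Without $\Delta_2$ this is false: membership of $D_su_\alpha$ in $L^G(d\mu)$ only gives $\int G(|D_su_\alpha|/\lambda)\,d\mu<\infty$ for \emph{some} $\lambda>0$, while the inequality $\widetilde G(g(t))\le tg(t)\le G(2t)$ shows that one needs $2D_su_\alpha$ to lie in the Orlicz \emph{class} $K^G(d\mu)$, which does not follow. (The analogous statement for $u_\alpha$ itself, Lemma~\ref{lema.aux}, works only because of the compact embedding into $L^{G_*}(\Omega)$; no such embedding is available for $D_su_\alpha$ on $\R^n\times\R^n$.) The paper obtains $g(|D_su_\alpha|)\in L^{\widetilde G}(d\mu)$ as a separate result, Proposition~\ref{prop.ds}, whose proof (adapted from Mustonen--Tienari) uses the \emph{minimality} of $u_\alpha$ in an essential way.

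This gap bites earlier than you indicate. Your dominated-convergence step for $\F'(u_\alpha)\phi$ needs a dominant for $g(|D_su_\alpha|+|D_s\phi|)\,|D_s\phi|$; convexity of $G$ together with the pointwise bound on $|D_s\phi|$ is not enough, because without $\Delta_2$ one cannot control $g(|D_su_\alpha|+M)$ or $G(2|D_su_\alpha|)$ by $G(|D_su_\alpha|)$. Once Proposition~\ref{prop.ds} is in hand, both the G\^ateaux computation and the extension to all $v\in W^s_0L^G(\Omega)$ go through --- the latter simply because $\F'$ and $\J'$ are then weak* continuous linear functionals and $W^s_0L^G(\Omega)$ is by definition the weak* closure of $C_c^\infty(\Omega)$, so the truncation-and-mollification step you flag as the main obstacle is in fact not needed.
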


The next result states that the spectrum of the fractional $g-$Laplacian is closed in the following sense:

\begin{thm} \label{lam.inf}
Fix $\alpha_0>0$. With the notation of Theorem \ref{thm.wsol} we have that
$$
\Sigma:=\{ \lam:\eqref{eq.eigen}\text{ has a nontrivial solution } u \text{ satisfying } \Phi_G(u)\leq \alpha_0\}
$$
is a closed subset of $\R$.
\end{thm}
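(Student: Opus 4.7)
The plan is to take an arbitrary sequence $(\lambda_k)\subset\Sigma$ with $\lambda_k\to\lambda$ in $\R$, select for each $k$ a corresponding nontrivial weak solution $u_k$ of \eqref{eq.eigen} at the eigenvalue $\lambda_k$ with $\Phi_G(u_k)\leq \alpha_0$, and then construct a nontrivial weak solution $u$ at level $\lambda$ with $\Phi_G(u)\leq \alpha_0$ as a subsequential limit of $(u_k)$. The overall architecture is the classical ``a priori bound $+$ weak compactness $+$ monotone passage to the limit'' strategy, adapted to the non-reflexive Orlicz setting as in \cite{MT}.

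First I would establish a uniform bound for $(u_k)$ in $W^s_0L^G(\Omega)$: testing the equation for $u_k$ against $v=u_k$ yields the identity
\[
\iint g(|D_su_k|)|D_su_k|\,d\mu \;=\; \lambda_k\int_\Omega g(|u_k|)|u_k|\,dx,
\]
and the Young-function identity $g(t)t=G(t)+\widetilde G(g(t))$, combined with the bound $\Phi_G(u_k)\leq \alpha_0$ and boundedness of $(\lambda_k)$, should control both $\F(u_k)$ and the Luxemburg-type size of $u_k$. Since $G$ need not satisfy $\Delta_2$, $W^s_0L^G(\Omega)$ is not reflexive, so I would extract via Banach--Alaoglu a weak$^*$ convergent subsequence $u_k\cde u$. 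The compact embedding $W^s_0L^G(\Omega)\hookrightarrow L^G(\Omega)$ ensured by the structural assumptions on $G$ then upgrades this to strong $L^G(\Omega)$ convergence and, after a further extraction, to pointwise a.e.\ convergence on $\Omega$.

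Next I would pass to the limit in the weak formulation. For a fixed test function $v\in W^s_0L^G(\Omega)$, the right-hand side converges by a.e.\ convergence, equi-integrability of $g(|u_k|)u_k/|u_k|$ in $L^{\widetilde G}(\Omega)$ (a consequence of the a priori bounds and an Orlicz H\"older inequality), and $\lambda_k\to\lambda$. The left-hand side is nonlinear in $D_su_k$ and requires a Browder--Minty/Leray--Lions style monotonicity trick: exploiting the monotonicity of the vector field $\xi\mapsto g(|\xi|)\xi/|\xi|$ together with testing against $v=u_k-u$ should give that $D_su_k\to D_su$ in measure with respect to $d\mu$ (and, by Vitali, in the appropriate modular sense), which is enough to pass the nonlinear integrand. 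To close, I would check that $u$ is nontrivial and still satisfies $\Phi_G(u)\leq \alpha_0$: the latter follows from a.e.\ convergence together with Fatou; for the former, if $u\equiv 0$, the test-with-$u_k$ identity forces $\F(u_k)\to 0$, and the Poincar\'e-type inequality for $W^s_0L^G$ combined with the minimal modular energy that a nontrivial eigenfunction must carry would yield a contradiction.

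The hardest step will be the nonlinear passage to the limit. Without the $\Delta_2$ condition, neither $L^G$ nor its dual are reflexive, so standard compactness and truncation tools are unavailable; one is forced to convert the weak$^*$ information $D_su_k\cde D_su$ into genuine pointwise or modular convergence via a delicate monotonicity argument. Establishing this improved convergence, enough to take the limit inside $g(\cdot)$, is the technical heart of the proof, and the step most sensitive to the absence of the doubling condition.
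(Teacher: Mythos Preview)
Your outline is sound and in fact more careful than the paper's own argument, but you are working harder than necessary at the step you flag as the hardest. The paper exploits a structural feature of the nonlocal operator that short-circuits the Browder--Minty machinery entirely: once you have $u_k\to u$ a.e.\ in $\R^n$ (from the compact embedding into $L^G(\Omega)$ together with $u_k=0$ on $\R^n\setminus\Omega$), the $s$-H\"older quotient
\[
D_su_k(x,y)=\frac{u_k(x)-u_k(y)}{|x-y|^s}
\]
converges to $D_su(x,y)$ pointwise a.e.\ with respect to $d\mu$ by direct substitution. The continuity of $\xi\mapsto g(|\xi|)\xi/|\xi|$ then gives a.e.\ convergence of the integrand on the left-hand side with no monotonicity trick at all. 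This is a genuine difference between the fractional and the local settings: for a local operator, a.e.\ convergence of $u_k$ tells you nothing about a.e.\ convergence of $\nabla u_k$, and the Minty argument is forced; here it comes for free.

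What remains, in either approach, is to justify passing the a.e.\ limit under the double integral. The paper does this rather summarily (it simply records the pointwise convergence and declares the limit passed), whereas your mention of equi-integrability and Vitali is the right instinct for making this honest. Your additional care about the a priori bound on $(u_k)$, the constraint $\Phi_G(u)\leq\alpha_0$ via Fatou, and nontriviality of the limit all goes beyond what the paper spells out. In short: your plan works, but replace the monotone-operator step by the one-line observation that $u_k\to u$ a.e.\ in $\R^n$ already yields $D_su_k\to D_su$ a.e.\ in $\R^n\times\R^n$, and the ``technical heart'' you anticipated largely evaporates.
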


Theorem \ref{lam.inf} has an immediate corollary which we would like to point out:
\begin{cor}\label{cor}
Fix $\alpha_0>0$. With the notation of Theorem \ref{thm.wsol} let
$$
\hat{\lam}:=\inf\{ \lam_\alpha:0<\alpha\leq\alpha_0\}. 
$$

Then the eigenvalue problem for $\hat{\lambda}$ has a nontrivial solution. 
\end{cor}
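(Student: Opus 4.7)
The statement is essentially a direct harvest of Theorem \ref{lam.inf} combined with the normalization built into Theorems \ref{thm.min}--\ref{thm.wsol}, so the plan is very short and relies on no new machinery.

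First I would unwind the definition of the infimum. By hypothesis $\lambda_\alpha>0$ for each $\alpha\in(0,\alpha_0]$, so $\hat\lambda\in[0,\infty)$ is a genuine nonnegative real number. Choose a minimizing sequence $\alpha_k\in(0,\alpha_0]$ with $\lambda_{\alpha_k}\to\hat\lambda$. For each $k$, Theorem \ref{thm.min} produces a one-signed minimizer $u_{\alpha_k}\in W^s_0L^G(\Omega)$ of the constrained problem \eqref{eq.min} with $\mathcal{J}(u_{\alpha_k})=\alpha_k\leq\alpha_0$, and Theorem \ref{thm.wsol} provides the corresponding $\lambda_{\alpha_k}>0$ such that $u_{\alpha_k}$ is a nontrivial weak solution of \eqref{eq.eigen}. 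In particular, each $\lambda_{\alpha_k}$ belongs to the set $\Sigma$ of Theorem \ref{lam.inf} (the modular constraint $\Phi_G(u_{\alpha_k})\leq\alpha_0$ being exactly the bound $\mathcal{J}(u_{\alpha_k})=\alpha_k\leq\alpha_0$).

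Now I would invoke Theorem \ref{lam.inf}: $\Sigma$ is closed in $\R$. Since $\lambda_{\alpha_k}\in\Sigma$ and $\lambda_{\alpha_k}\to\hat\lambda$, closedness forces $\hat\lambda\in\Sigma$. By the very definition of $\Sigma$, there is then a nontrivial $u\in W^s_0L^G(\Omega)$ with $\Phi_G(u)\leq\alpha_0$ solving \eqref{eq.eigen} with $\lambda=\hat\lambda$, which is exactly the claim.

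There is essentially no obstacle beyond bookkeeping. The only delicate point worth a remark is that $\Sigma$ is \emph{a priori} only closed, not compact, so one must know that $\hat\lambda$ is finite before applying closedness; this is immediate from $\hat\lambda\geq 0$ and the fact that $\Sigma\cap[0,\infty)$ is nonempty (take any $\alpha\in(0,\alpha_0]$). All the real work has already been done inside Theorems \ref{thm.wsol} and \ref{lam.inf}, which provide, respectively, the existence of the approximating eigenpairs and the limiting procedure that allows the passage $k\to\infty$ while retaining a nontrivial limit.
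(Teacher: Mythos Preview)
Your proof is correct and follows essentially the same approach as the paper: pick a sequence $\lambda_{\alpha_k}\to\hat\lambda$ with $\alpha_k\in(0,\alpha_0]$, observe that each $\lambda_{\alpha_k}\in\Sigma$ via Theorems~\ref{thm.min}--\ref{thm.wsol}, and conclude $\hat\lambda\in\Sigma$ by the closedness in Theorem~\ref{lam.inf}. The paper's proof is in fact terser than yours, omitting the bookkeeping you spell out.
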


In general we have that $\lam_\alpha>0$ and $\hat{\lam}> 0$. The following result refines the lower bound in thin domains.
\begin{prop} \label{prop.cota}
When $\Omega$ has small diameter $\d$ in the sense that for a fixed $\ve>0$
$$
\d\leq \left(\frac{1}{2(1+\ve)}\min\left\{1,\frac{n\omega_n}{2s}\right\} \right)^{\frac{1}{s}},
$$
then $\lam_\alpha \geq \ve$ and $\lam_0 \geq  \ve$, where $\omega_n$ denotes the volume of the unit sphere in $\R^n$.
\end{prop}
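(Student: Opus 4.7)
The plan is to view $\lam_\alpha$ as a Rayleigh-type quotient and then to prove a nonlocal Poincar\'e-type inequality adapted to the non-homogeneous integrand $g(t)t$. Testing the weak formulation of Theorem~\ref{thm.wsol} against $v=u_\alpha$ gives
\begin{equation*}
\lam_\alpha=\frac{\iint_{\R^n\times\R^n}g(|D_s u_\alpha|)|D_s u_\alpha|\,d\mu}{\int_\Omega g(|u_\alpha|)|u_\alpha|\,dx}.
\end{equation*}
Since $u_\alpha\equiv 0$ on $\R^n\setminus\Omega$ and $|D_s u_\alpha|$ is symmetric in $(x,y)$, I would keep only the cross terms in the numerator, obtaining
\begin{equation*}
\iint_{\R^n\times\R^n}g(|D_s u_\alpha|)|D_s u_\alpha|\,d\mu\;\geq\;2\int_\Omega\!\int_{\R^n\setminus\Omega}g\!\left(\tfrac{|u_\alpha(x)|}{|x-y|^s}\right)\tfrac{|u_\alpha(x)|}{|x-y|^s}\,\tfrac{dy\,dx}{|x-y|^n}.
\end{equation*}

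For fixed $x\in\Omega$ the diameter hypothesis gives $\Omega\subset\overline{B_\d(x)}$, so the inner domain of integration contains $\R^n\setminus B_\d(x)$. Passing to polar coordinates in $z=y-x$ and making the substitution $\tau=|u_\alpha(x)|/\rho^s$ collapses the inner integral to an antiderivative of $g$, yielding the clean identity
\begin{equation*}
\int_{\R^n\setminus B_\d(x)}g\!\left(\tfrac{|u_\alpha(x)|}{|x-y|^s}\right)\tfrac{|u_\alpha(x)|}{|x-y|^s}\,\tfrac{dy}{|x-y|^n}=\frac{n\omega_n}{s}\,G\!\left(\tfrac{|u_\alpha(x)|}{\d^s}\right).
\end{equation*}
Without the $\Delta_2$ condition one cannot compare $G(t)$ and $g(t)t$ up to an absolute constant, but only a one-sided bound is needed: I would invoke the elementary inequality $G(aT)\geq (a-1)\,g(T)\,T$ valid for $a\geq 1$, which follows from $\int_T^{aT}g\geq g(T)(a-1)T$ by monotonicity of $g$. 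The hypothesis forces $\d^s\leq\tfrac{1}{2(1+\ve)}\leq \tfrac{1}{2}$, so $a=1/\d^s\geq 2$ is admissible; integrating over $\Omega$ and assembling the previous steps gives
\begin{equation*}
\lam_\alpha\;\geq\;\frac{2n\omega_n}{s}\left(\frac{1}{\d^s}-1\right).
\end{equation*}

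A short case split according to whether $\min\{1,n\omega_n/(2s)\}$ equals $1$ (so $2n\omega_n/s\geq 4$) or $n\omega_n/(2s)$ (so $2n\omega_n/s\leq 4$) then shows that the smallness condition on $\d$ forces this lower bound to exceed $\ve$ (in fact to exceed $4+8\ve$, leaving ample room). Since the estimate is uniform in $\alpha$ and in the choice of minimizer, the same inequality transfers to $\hat\lam$ of Corollary~\ref{cor}, which I take to be the meaning of $\lam_0$ in the statement. The main difficulty is precisely the absence of $\Delta_2$: the standard equivalence $G(t)\asymp g(t)t$ is unavailable, so I sidestep it by producing $G$ directly via the radial integration of $g$ and by using only the monotonicity of $g$ in the comparison step. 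The price is a constant that deteriorates like $\d^{-s}$, which is exactly what forces the smallness hypothesis on the diameter.
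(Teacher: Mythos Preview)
Your argument is correct and takes a genuinely different route from the paper. The paper first sandwiches $tg(t)$ between $G(t)$ and $\tfrac{1}{\ve}G((1+\ve)t)$ to reduce the Rayleigh quotient to a ratio of modulars $\Phi_{s,G}(u_\alpha)/\Phi_G((1+\ve)u_\alpha)$, and then invokes the Poincar\'e inequality of Proposition~\ref{eq.poincare} (which is proved by integrating over an annulus $\d\leq|z|\leq 2\d$) to bound this ratio below by $1$ when the diameter is small. You instead bypass the Poincar\'e inequality entirely: throwing away the $\Omega\times\Omega$ contribution and integrating radially over the whole exterior $\R^n\setminus B_\d(x)$, your substitution $\tau=|u_\alpha(x)|/\rho^s$ converts $g(\tau)\tau\,d\rho/\rho$ exactly into $\tfrac{1}{s}g(\tau)\,d\tau$, producing $G$ directly in the numerator; the one-sided bound $G(aT)\geq(a-1)g(T)T$ then reconstructs the denominator. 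Your approach is more self-contained and yields the explicit lower bound $\lam_\alpha\geq \tfrac{2n\omega_n}{s}(\d^{-s}-1)$, which is sharper for very small $\d$; the paper's approach has the virtue of isolating the Poincar\'e step as a reusable lemma. Both arguments handle the absence of $\Delta_2$ the same way, namely by using only monotonicity of $g$ and one-sided comparisons between $G$ and $tg(t)$; your reading of $\lam_0$ as $\hat\lam$ is also what the paper intends.
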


\begin{rem}
When $G$ and its Legendre transform $\tilde G$ satisfy the doubling condition, there exists a constant $c$ depending only on the growth behavior of $G$ such that
\begin{equation} \label{desig}
c^{-1} \Lambda_\alpha \leq \lam_\alpha \leq c \Lambda_\alpha,
\end{equation}
(see \cite[Corollary 5.3]{S}). However, for a general $G$ both quantities are not easily comparable. To find a relation like \eqref{desig} for a general $G$ with no $\Delta_2$ assumptions is left then as an open issue.
\end{rem}

Finally, the following Faber-Krahn type result follows in our setting.
\begin{prop}\label{prop.fk}
Let $B$ be a ball with $\mathcal{L}^n(B)=\mathcal{L}^n(\Omega)$ and $\alpha>0$. Then
$$
\Lambda_\alpha(B) \leq \Lambda_\alpha(\Omega).
$$
If additionally the application $t\mapsto tg(t)$ is convex, it holds that
$$
\lam_\alpha(B) \leq \lam_\alpha(\Omega).
$$
\end{prop}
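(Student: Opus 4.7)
The plan is to use the classical Schwarz (symmetric decreasing) rearrangement, combined with a fractional Orlicz analogue of the Polya--Szego inequality. For $u\in W^s_0L^G(\Omega)$, let $u^*$ denote its symmetric decreasing rearrangement; since $\mathcal{L}^n(\supp u^*)=\mathcal{L}^n(\supp u)\leq \mathcal{L}^n(\Omega)=\mathcal{L}^n(B)$, one has $u^*\in W^s_0L^G(B)$. The two ingredients I would invoke are \emph{equimeasurability}, giving $\int \Psi(|u^*|)\,dx=\int \Psi(|u|)\,dx$ for every Borel $\Psi\geq 0$, and a \emph{fractional Orlicz Polya--Szego inequality}: for every Young function $\Psi$,
$$
\iint \Psi(|D_s u^*|)\,d\mu \leq \iint \Psi(|D_s u|)\,d\mu.
$$
This last estimate extends the known fractional $L^p$ Polya--Szego inequality to Orlicz modulars and can be derived via a layer-cake decomposition of $\Psi$ combined with a Riesz-type rearrangement estimate applied level by level to the Gagliardo kernel.

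With these tools the first inequality is essentially immediate. Let $u_\alpha$ be the minimizer on $\Omega$ furnished by Theorem \ref{thm.min}. Equimeasurability with $\Psi=G$ gives $\J(u_\alpha^*)=\alpha$, so $u_\alpha^*$ is admissible for the minimization defining $\Lambda_\alpha(B)$; Polya--Szego with $\Psi=G$ gives $\F(u_\alpha^*)\leq \F(u_\alpha)=\Lambda_\alpha(\Omega)$. Consequently $\Lambda_\alpha(B)\leq \Lambda_\alpha(\Omega)$.

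For the eigenvalue inequality, I would test the weak form of \eqref{eq.eigen} against $v=u_\alpha$ to obtain the Rayleigh-type identification
$$
\lam_\alpha = \frac{\iint H(|D_s u_\alpha|)\,d\mu}{\int H(|u_\alpha|)\,dx}, \qquad H(t):=tg(t),
$$
and likewise for $\lam_\alpha(B)$. When $H$ is convex it is itself a Young function (since $H(0)=0$ and $H$ is nondecreasing, as $g$ is), so equimeasurability and Polya--Szego apply with $\Psi=H$ as well. Writing $R_H(u)$ for the quotient above, we obtain $R_H(u_\alpha^*)\leq R_H(u_\alpha)=\lam_\alpha(\Omega)$.

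The closing step, which I expect to be the main obstacle, is to establish $\lam_\alpha(B)\leq R_H(u_\alpha^*)$. A natural route is to argue that the variational problem on $B$ admits a radially symmetric decreasing minimizer (by applying the Polya--Szego step to an arbitrary $B$-minimizer) and to identify $u_\alpha^*$ itself as such a minimizer, in which case its Lagrange multiplier equals $R_H(u_\alpha^*)$ and therefore bounds $\lam_\alpha(B)$ from above. The delicate point is that in the non-reflexive and non-homogeneous Orlicz setting at hand, uniqueness of minimizers is not automatic, so the identification $\lam_\alpha(B)=R_H(u_\alpha^*)$ must be justified by carefully exploiting the equality cases in Polya--Szego for the minimization problem on $B$, or by selecting $u_\alpha^*$ as the privileged radial decreasing minimizer there.
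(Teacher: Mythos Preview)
Your approach is exactly the one taken in the paper: symmetric decreasing rearrangement together with the fractional Orlicz P\'olya--Szeg\H{o} principle (which the paper cites as \cite[Theorem~3.1]{Cianchi}), applied first with $\Psi=G$ to get $\Lambda_\alpha(B)\le\Lambda_\alpha(\Omega)$, and then, under the extra convexity assumption, with $\Psi=H(t)=tg(t)$ to control the Rayleigh quotient.

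The ``closing step'' you flag as the main obstacle is genuinely the crux, and it is worth noting that the paper's proof is no more detailed at this point than yours: after obtaining $\Phi_{s,H}(u^*)\le\Phi_{s,H}(u)$ and $\Phi_H(u^*)=\Phi_H(u)$, it simply asserts $\lambda_\alpha(B)\le\lambda_\alpha(\Omega)$ without explaining why $\lambda_\alpha(B)$ is bounded above by $R_H(u_\alpha^*)$. Since $\lambda_\alpha$ is defined as the Lagrange multiplier attached to \emph{a} minimizer of $\F$ on $M_\alpha$ (Theorem~\ref{thm.wsol}), and not as an infimum of $R_H$ over $M_\alpha$, the inequality $\lambda_\alpha(B)\le R_H(u_\alpha^*)$ indeed requires knowing either that $u_\alpha^*$ is a minimizer for the problem on $B$, or that $\lambda_\alpha(B)$ admits a variational characterization as $\inf_{M_\alpha(B)} R_H$. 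Your instinct to try to force $u_\alpha^*$ to be the $B$-minimizer via the equality case in P\'olya--Szeg\H{o}, or to declare it the chosen minimizer, is a reasonable line of attack; the paper does not spell out such an argument either. In short, you have reproduced the paper's proof and have been more scrupulous than the authors in isolating the one step that is not fully justified.
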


\begin{rem}
The results in this paper could apply more generally to 
\begin{equation*}
\left\{ \begin{array}{cccc}
\gfls u  & = & \lambda h\left(|u|\right)\frac{u}{|u|} &  \textrm{ in }\Omega \\
u & = & 0  &\textrm{ in }\R^n\setminus\Omega,
\end{array} \right.
\end{equation*}
with $h$ the derivative of a Young function $H$ which grows essentially more slowly than $G_\ast$ (see \ref{ssec.comp} for definitions). However, we decided to stick with the simpler case for the sake of clarity in the presentation. 
\end{rem}

As an application, we would like to point out that slight modifications to the arguments in Sections \ref{sec.min} and \ref{sec.lag} allow us to prove existence of solutions for the following nonlinear eigenvalue problem:
\begin{equation}\label{eq.nonlinear}
\left\{ \begin{array}{cccc}
\gfls u &  = &\lambda f(u)  & \textrm{ in }\Omega \\
u & = & 0  & \textrm{ in }\R^n\setminus\Omega,
\end{array} \right.
\end{equation}
To set the problem properly, let us define the constraint functional
\[
\tJ(u):=\int_\Omega F(u)\:dx
\] 
with $F\colon \R \to \R$ a given locally Lipschitz function. We also set $f=F'$ and we will assume  
\begin{equation}\label{eq.f}
|f(t)|\leq C(|t|+1),
\end{equation}
and consider the minimization problem
\begin{equation}\label{eq.min2}
\min\left\{\F(u):u\in W^s_0L^G(\Omega),\,\tJ(u)=0\right\}.
\end{equation}

Then, we have the following Theorem:

\begin{thm}\label{thm.nonlinear}
Let $G$ be a Young function and let $F\colon\R \to\R$ be a locally Lipschitz function such that $f=F'$ satisfies \eqref{eq.f}. Then the minimization problem \eqref{eq.min2} has a solution $u_0\in W^s_0L^G(\Omega)$ and there exists $\lambda\in\R$ for which $u_0$ is a weak solution of the eigenvalue problem \eqref{eq.nonlinear}.  
\end{thm}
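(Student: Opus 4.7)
The plan is to adapt the two-step strategy used for Theorems \ref{thm.min} and \ref{thm.wsol}, replacing the constraint $\{\J=\alpha\}$ by $\{\tJ=0\}$. First I would prove existence of a minimizer $u_0\in W^s_0 L^G(\Omega)$ for \eqref{eq.min2}, then apply the same generalized Lagrange multiplier theorem invoked in the proof of Theorem \ref{thm.wsol} to produce the eigenvalue $\lambda$.

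For the minimization step, the growth assumption \eqref{eq.f} yields $|F(t)|\leq C'(1+|t|^2)$, so $\tJ$ is well defined and continuous on $L^2(\Omega)$, and hence on $W^s_0 L^G(\Omega)$ through the subcritical compact embedding implied by the structural conditions on $G$. Non-emptiness of the admissible set is immediate if $F(0)=0$; otherwise a continuity/sign argument along a one-parameter family $t\varphi$, with $\varphi\in C_c^\infty(\Omega)$, produces an admissible function (using that $F$ must attain both signs in the non-degenerate case). A minimizing sequence $\{u_k\}$ has $\F(u_k)$ bounded, so by the Poincaré-type estimate available in $W^s_0 L^G$ it is bounded in norm; up to a subsequence $u_k\rightharpoonup u_0$. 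Weak lower semicontinuity of $\F$, exactly as exploited in Theorem \ref{thm.min}, together with strong-$L^2$ continuity of $\tJ$ along this subsequence, shows that $u_0$ is admissible and realizes the infimum.

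For the multiplier step, the linear growth of $f$ renders $\tJ$ Fréchet differentiable on $W^s_0 L^G(\Omega)$ with $\tJ'(u_0)v=\int_\Omega f(u_0)v\,dx$. Since $\F$ retains exactly the directional differentiability structure exploited in the proof of Theorem \ref{thm.wsol} (which does not rely on $\Delta_2$), the same generalized Lagrange multiplier theorem adapted from \cite{MT} applies without essential modification, producing $\lambda\in\R$ such that $u_0$ weakly solves \eqref{eq.nonlinear}. The bound \eqref{eq.f} is precisely what is needed to check the continuity hypotheses of that abstract theorem for the new constraint.

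The main obstacle I expect is the geometry of the constraint set $\{\tJ=0\}$. Unlike $\{\J=\alpha\}$, which is quasi-homogeneous and easy to parametrize by dilation, the zero level of $\tJ$ depends delicately on the shape of $F$; ensuring non-emptiness and that the minimum is attained at a \emph{genuinely useful} element (note that when $F(0)=0$ the function $u_0\equiv 0$ with $\lambda=0$ trivially satisfies the conclusion) requires a case-by-case discussion of $F$ that has no analogue in Theorem \ref{thm.min}. A secondary subtlety is that $\lambda$ carries no a priori sign here, so the positivity conventions specific to the $G$-eigenvalue setting must be dropped throughout.
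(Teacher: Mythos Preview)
Your proposal is correct and follows essentially the same two--step scheme as the paper's own proof: first repeat the argument of Theorem~\ref{thm.min} with the new constraint, then rerun the Lagrange--multiplier computation from Theorem~\ref{thm.wsol}. Two small differences in execution are worth noting. For the stability of the constraint the paper stays inside the Orlicz framework: it uses the compact embedding $W^s_0L^G(\Omega)\hookrightarrow L^G(\Omega)$ of Proposition~\ref{embed} together with \eqref{eq.f} and the H\"older inequality~\eqref{eq.hol} to show $\tJ(u_k)\to\tJ(u_0)$, rather than passing through an $L^2$ embedding (which would require an extra check that $t^2$ is subcritical for the given $G$). For the multiplier step the paper does not invoke Fr\'echet differentiability of $\tJ$ abstractly; instead it splits off the degenerate case $f(u_0)=0$ a.e.\ (forcing $u_0\equiv 0$, a solution for any $\lambda$), and in the remaining case replaces Lemma~\ref{lem.2} by a direct Implicit Function Theorem argument applied to $h(\varepsilon,\delta)=\int_\Omega F((1-\varepsilon)u_0+\delta v)\,dx$ to produce the curve $\delta(\varepsilon)$ with the same properties \eqref{eq.deltabis}; after that the proof of Theorem~\ref{thm.wsol} carries over verbatim. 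Your discussion of obstacles already anticipates this degenerate/non-degenerate split, so there is no gap---only a difference in how explicitly the replacement for Lemma~\ref{lem.2} is spelled out. The non-emptiness issue you raise is not addressed in the paper.
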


The rest of the paper is organized as follows: Section \ref{sec.orl} contains the necessary preliminary definitions and results regarding Young functions and Orlicz and fractional Orlicz-Sobolev spaces. Section \ref{sec.min} is devoted to the proof Theorem \ref{thm.min}, i.e. the existence of minimizers, and in Section \ref{sec.lag} we prove that minimizers are in fact solutions to the eigenvalue problem for some $\lambda>0$ (Theorem \ref{thm.wsol}). In Section \ref{sec.spec} we give the proof of Theorem \ref{lam.inf} and the further properties of the spectrum contained in Corollary \ref{cor} and Propositions \ref{prop.cota} and \ref{prop.fk}. Finally, in Section \ref{sec.nonlinear} we prove Theorem \ref{thm.nonlinear}.
 
\section{Orlicz and fractional Orlicz-Sobolev spaces}\label{sec.orl}

In this section we present the relevant aspects of the theory of Orlicz and fractional Orlicz-Sobolev spaces: the interested reader is referred to the classical reference \cite{Ad} and the recent papers \cite{Cianchi, DNFBS, FBS} for further treatment of these spaces. We start with the definition of \emph{Young functions.} 

\subsection{Young functions}\label{ssec.yfun}
An application $G\colon\R_+\to \R_+$ is said to be a  \emph{Young function} if it admits the integral formulation $G(t)=\int_0^t g(\tau)\,d\tau$, where the right continuous function $g$ defined on $[0,\infty)$ has the following properties:
\begin{align*}
&g(0)=0, \quad g(t)>0 \text{ for } t>0 \label{g0} \tag{$g_1$}, \\
&g \text{ is nondecreasing on } (0,\infty) \label{g2} \tag{$g_2$}, \\
&\lim_{t\to\infty}g(t)=\infty  \label{g3} \tag{$g_3$} .
\end{align*}
From these properties it is easy to see that a Young function $G$ is continuous, nonnegative, strictly increasing and convex on $[0,\infty)$. Also, from the convexity of $G$ it easily follows that
\begin{equation}\label{G1}
G(\alpha t) \leq \alpha G(t) \quad \text{ if } \alpha\in[0,1], \, t\geq 0
\end{equation}
and
\begin{equation}\label{G2} 
G(\beta t)\geq \beta G(t) \quad \text{ if } \beta\in(1,\infty), \, t\geq 0.
\end{equation} 
 
A Young function $G$ is said to satisfy the \emph{$\Delta_2$ condition (or doubling condition)} if there exist $C>0$ and $T\geq0$ such that 
\begin{equation}\label{eq.delta2}
G(2t)\leq CG(t)\quad \text{for all }t\geq T.
\end{equation}
 
In \cite{S}, the eigenvalue problem \eqref{eq.eigen} is addressed under the assumption that there exist $p^-,p^+>0$ such that 	
\begin{equation}\label{eq.p}
1<p^-\leq\frac{tg(t)}{G(t)}\leq p^+<\infty\quad\textrm{ for any }t>0.
\end{equation}
Roughly speaking, \eqref{eq.p} says that $G$ is trapped between to power functions.  The upper bound in this condition can be shown to be equivalent to \eqref{eq.delta2}, see Theorem 4.4.4 of \cite{LKJS}. The lower bound, on the other hand, corresponds to the complementary function $\tilde G$ (see \eqref{eq.comp} below) satisfying the doubling condition. 

\emph{In this paper we do not make any such assumption on $G$.} As we will point out throughout the article, this will pose several difficulties as many good properties of Orlicz and fractional Orlicz-Sobolev spaces strongly rely on the estimate \eqref{eq.delta2}. 

\medskip

Possible Young functions that fall into the scope of our paper (and do not satisfy \eqref{eq.p}) are
\begin{itemize}

\item[$\ast$] $G_1(t):=e^{t^\gamma}-1$, $\gamma> 1$;

\item[$\ast$] $G_2(t):=e^t-t-1$;

\item [$\ast$] $G_3(t)=\tau(e^t-t-1)+(1-\tau)\frac{t^p}{p}$ where $p\in(1,\infty)$ and $\tau\in(0,1)$ or, in general, any other convex combination of Young functions such as one of them does not satisfy \eqref{eq.p}. 
\end{itemize}

If we consider for instance $G_1$, Theorem \ref{thm.wsol} gives nontrivial (weak) solutions of the following eigenvalue problem
\[
\textrm{p.v.}\int_{\R^n}(u(x)-u(y))|u(x)-u(y)|^{\gamma-2}\frac{e^{\frac{|u(x)-u(y)|^\gamma}{|x-y|^{s\gamma}}}}{|x-y|^{s(\gamma-1)+n+1}}\:dy = \lambda  e^{|u|^\gamma}|u|^{\gamma-2}u. 
\]

The \emph{complementary Young function} $\tilde G$ of a Young function $G$ is defined as
\begin{equation}\label{eq.comp}
\tilde G(t):=\sup\{tw -G(w): w>0\}.
\end{equation}
It is not hard to see that $\tilde G$ can be written in terms of the inverse of $g$ as
\begin{equation} \label{eq.tildeg}
\tilde G(t)=\int_0^t g^{-1}(\tau)\,d\tau,
\end{equation}
see \cite{Ad}.

From \eqref{eq.comp} it is clear that the following Young-type inequality holds
\[
ab\leq G(a)+\tilde G(b)\qquad \text{for all }a,b\geq 0,
\]
and the following H\"older's type inequality
\begin{equation}\label{eq.hol}
\int_\Omega |uv|\,dx \leq 2\|u\|_G \|v\|_{\tilde G}
\end{equation}
for all $u\in L^G(\Omega)$ and $v\in L^{\tilde G}(\Omega)$. 

\medskip

The following relation are sometimes useful: for any $t>0$ 
\begin{equation} \label{rel}
G(2t)=\int_0^{2t} g(\tau)\,d\tau  > \int_t^{2t} g(\tau)\,d\tau > tg(t)
\end{equation}
and
\begin{equation} \label{rel2}
G(t)=\int_0^{t} g(\tau)\,d\tau  \leq t g(t).
\end{equation}

\subsection{Orlicz spaces} \label{sec.sobolev}

Given a set $\Omega\subset \R^n$ not necessarily bounded, we consider the Orlicz class $K^G(\Omega)$ defined as 
$$
K^G(\Omega)=\left\{ u \text { measurable and defined in }\Omega\colon \int_\Omega G(|u(x)|)\,dx<\infty\right\}.
$$
It is well-known that $K^G(\Omega)$ is a vector space if and only if $G$ satisfies the $\Delta_2$ condition.

The Orlicz space $L^G(\Omega)$ is the lineal hull of $K^G(\Omega)$, that is, the smallest vector space (under pointwise addition and scalar multiplication) that contains $K^G(\Omega)$. It follows that $L^G(\Omega)$ contains all scalar multipliers $\lam u$ of $u\in K^G(\Omega)$. Thus 
$$K^G(\Omega)\subset L^G(\Omega)$$ 
with equality if and only if $G$ satisfies the $\Delta_2$ condition.

The space $L^G(\Omega)$ is a Banach space endowed with the Luxemburg norm
\begin{equation}\label{eq.lnorm}
\|u\|_G=\inf\left\{ \lam>0\colon \int_\Omega G\left(\frac{|u(x)|}{\lam}\right)\,dx \leq 1\right\}.
\end{equation}
The closure in $L^G(\Omega)$ of all bounded measurable functions is denoted by $E^G(\Omega)$. It follows then that
$$E^G(\Omega)\subset K^G(\Omega)$$ 
with equality if and only if $G$ satisfies the $\Delta_2$ condition.

An important remark  is that the space $L^G$ is reflexive if and only if both $G$ and $\tilde G$ satisfy the $\Delta_2$ condition (see \cite{KR}).

\subsection{Fractional Orlicz-Sobolev spaces}\label{ssec.forl} 

Given a fractional parameter $s\in (0,1)$ we consider the space 
$$
W^s L^G(\Omega)=\left\{  u\in L^G(\Omega)\colon D_s u \in L^G (\R^n\times\R^n, d\mu)\right\}
$$
where we have denoted the $s-$H\"older quotient
$$
D_s u(x,y) = \frac{u(x)-u(y)}{|x-y|^s}
$$
and the measure $$d\mu=\frac{dxdy}{|x-y|^n}.$$  

From now on, the modulars in $L^G(\Omega)$ and $W^s L^G(\Omega)$ will be denoted as
$$
\Phi_G(u):=\int_\Omega G(|u|)\,dx \qquad \Phi_{s,G}(u):=\iint_{\R^n\times \R^n} G(|D_s u|)\,d\mu,
$$
respectively. Over the  space $W^s L^G(\Omega)$ we define the norm 
\begin{equation}\label{eq.norm}
\|u\|_{s,G} := \|u\|_G + [u]_{s,G},
\end{equation}
where
\[
[u]_{s,G} :=\inf\left\{\lambda>0\colon \Phi_{s,G}\left(\frac{u}{\lambda}\right)\le 1\right\}
\]
is the {\em $(s,G)-$Gagliardo seminorm}. The space $W^s E^G(\R^n)$ is defined in an analogous way.

The following structural properties hold true:
\begin{prop}
Let $\Omega$ be an open bounded subset of $\R^n$ with Lispchitz boundary and $G$ an Young function. Then
\begin{enumerate}
\item[(i)] $L^G(\Omega)$ is a Banach space under the norm \eqref{eq.lnorm} and $E^G(\Omega)$ is a closed subspace of $L^G(\Omega)$ (hence a Banach space itself). Furthermore, $E^G(\Omega)$ is separable. 

\item[(ii)] $W^sL^G(\Omega)$ is a Banach space with the norm \eqref{eq.norm}, $W^sE^G(\Omega)$ is a closed subspace of $W^sL^G(\Omega)$ (hence a Banach space itself). Furthermore, $W^sE^G(\Omega)$ is separable. 
\end{enumerate}
\end{prop}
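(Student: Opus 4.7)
The plan is to treat the two items in parallel, reducing (ii) to (i) by an isometric embedding into a product of known spaces.

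For part (i), the completeness of $L^G(\Omega)$ under the Luxemburg norm \eqref{eq.lnorm} is classical and can simply be cited from \cite{Ad} or \cite{KR}: the key point is that if $\{u_n\}$ is Cauchy in $\|\cdot\|_G$ then it is Cauchy in measure (via the fact that $G$ is strictly increasing and convex), so one extracts an a.e.\ pointwise limit $u$ and uses Fatou's lemma on the modular $\Phi_G$. That $E^G(\Omega)$ is a closed subspace is immediate from its definition as the $L^G$-closure of bounded measurable functions, and separability of $E^G(\Omega)$ follows because the bounded step functions with rational values over cubes with rational vertices form a countable dense subset (they approximate bounded measurable functions in $L^\infty$, hence in $L^G$ since $G$ maps bounded sets to bounded sets, and $E^G$ is the $L^G$-closure of those).

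For part (ii), the clean route is to introduce the linear map
\[
T\colon W^sL^G(\Omega)\to L^G(\Omega)\times L^G(\R^n\times\R^n,d\mu),\qquad Tu=(u,D_su).
\]
By construction $\|Tu\|:=\|u\|_G+[u]_{s,G}=\|u\|_{s,G}$ so $T$ is a linear isometry onto its image, where the target carries the product (sum) norm and is a Banach space by (i). Thus it suffices to show that the image $T(W^sL^G(\Omega))$ is closed in the product space. Given a Cauchy sequence $\{u_n\}\subset W^sL^G(\Omega)$, the coordinates give $u_n\to u$ in $L^G(\Omega)$ and $D_su_n\to v$ in $L^G(\R^n\times\R^n,d\mu)$ for some limits $u,v$. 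Convergence in Luxemburg norm implies convergence in measure, so up to a subsequence $u_n\to u$ a.e.\ on $\Omega$ and $D_su_n\to v$ a.e.\ on $\R^n\times\R^n$; but then $D_su_n(x,y)\to D_su(x,y)$ a.e., hence $v=D_su$ a.e., proving $u\in W^sL^G(\Omega)$ with $Tu=(u,v)$. This gives both completeness of $W^sL^G(\Omega)$ and, by restricting $T$, that $W^sE^G(\Omega)$ is a closed subspace (its image lies inside the closed subspace $E^G(\Omega)\times E^G(\R^n\times\R^n,d\mu)$ of the target, and the same argument with all limits staying in $E^G$ shows closedness).

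Finally, separability of $W^sE^G(\Omega)$ follows from the same embedding: $T$ identifies $W^sE^G(\Omega)$ isometrically with a subset of the product $E^G(\Omega)\times E^G(\R^n\times\R^n,d\mu)$, which is separable by (i) applied to each factor (the second factor uses the $\sigma$-finite measure space $(\R^n\times\R^n,d\mu)$ in place of $\Omega$, but the same approximation by rational simple functions over a countable generating family of measurable sets works). Any subset of a separable metric space is separable, so $W^sE^G(\Omega)$ is separable.

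The main subtlety, and the only place where one must be a bit careful because no $\Delta_2$ assumption is made, is the passage from norm convergence to a.e.\ pointwise convergence along a subsequence, which identifies the limit $v$ with $D_su$. Once this identification is in hand everything else reduces to applying (i) coordinatewise, so no genuinely new difficulty arises in the fractional Orlicz--Sobolev setting.
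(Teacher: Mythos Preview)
The paper does not actually prove this proposition: it is stated without proof, with part~(i) implicitly deferred to the classical references \cite{Ad,KR}, and part~(ii) justified only by the remark (a few lines below the statement) that, via \cite[Proposition~2.11]{FBS}, the map $u\mapsto(u,D_su)$ identifies $W^sL^G(\Omega)$ and $W^sE^G(\Omega)$ isometrically with closed subspaces of $L^G(\Omega)\times L^G(\R^n\times\R^n,d\mu)$ and $E^G(\Omega)\times E^G(\R^n\times\R^n,d\mu)$, respectively. Your argument is correct and is exactly the proof that this remark is pointing to: you make the isometric embedding explicit, verify that its range is closed by identifying the limit of $D_su_n$ with $D_su$ via a.e.\ convergence along a subsequence, and then read off completeness and separability from the corresponding properties of the factors. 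So your approach coincides with the one the paper indicates but does not write out.
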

The separability of $L^G(\Omega)$ is contingent on $G$ satisfying the $\Delta_2$ condition.

The space $W^s_0 E^G(\Omega)$ is the closure of $C_0^\infty(\Omega)$ in $W^sL^G(\Omega)$ with respect to the norm \eqref{eq.norm}. In view of \cite[Proposition 2.11]{FBS}, throughout the map 
$$
u\mapsto \left(u,D_s u \right)
$$
the spaces $W^s L^G(\Omega)$ and $W^s E^G(\Omega)$ can be isometrically identified with $L^G(\Omega)\times L^G(\R^n\times \R^n,d\mu)$ and $E^G(\Omega)\times E^G(\R^n\times \R^n,d\mu)$, respectively.
Hence, using the fact that
$$
(E^{\tilde G})'=L^G,\qquad \text{and}\qquad (E^G)' = L^{\tilde G},
$$
(see for instance \cite{Ad}), the space $W^s L^G(\Omega)$ is a closed subspace of $L^G(\Omega)\times L^G(\R^n\times \R^n,d\mu)$ being this space space the dual of the separable space $(E^{\tilde G}\times E^{\tilde G}(d\mu))'$. Therefore, by the Banach-Alaoglu theorem, $W^s L^G(\Omega)$ is weak* closed in $L^G(\Omega)\times L^G(d\mu)$.

We define $W^s_0 L^G(\Omega)$ as the weak* closure of $C_c^\infty(\Omega)$ in $W^s L^G(\Omega)$, hence $W^s_0 L^G(\Omega)$ is a weak* closed subset of the dual of a separable space.

\subsection{Compact embeddings}\label{ssec.comp}

In this subsection we recall some compact space embeddings. As mean of a comparative explanation, we recall that in the classical fractional Sobolev space setting $W^{s,p},\:s\in(0,1),\:p\in[1,\infty)$ the compact embedding
\begin{equation}\label{eq.sobemb}
 W^{s,p}(\Omega)\subset\subset L^q(\Omega)
\end{equation}
holds for $q\in[1,p]$ in the sense that any family $\mathcal{F}$ satisfying that 
\[
\sup_{u\in\mathcal{F}}\iint_{\Omega\times\Omega}\frac{|u(x)-u(y)|^p}{|x-y|^{n+sp}}\:dy\:dx<\infty
\]
is precompact in $L^q(\Omega)$ (see e.g. \cite[Theorem 7.1]{DNPV}). In particular, if $sp<n$, the continuous embedding allows to extend \eqref{eq.sobemb} to 
$q\in[1,p^\ast)$, where $p^\ast$ is the Sobolev conjugate 
\[
p^\ast:=\frac{np}{n-sp}.
\]

When trying to extend these notions to Orlicz-Sobolev spaces (and particularly when aiming at obtaining an analogous embedding as \eqref{eq.sobemb}), a precise notion of the behavior of the functions at infinity is required, notion that is self-evident in the classical case as powers are readily comparable. The following definitions serve that precise purpose: given Young functions $A$ and $B$, we say that $A$ decreases essentially more rapidly (near infinity) than $B$ if
$$
\lim_{t\to\infty} \frac{A(t)}{B(\beta t)}=0
$$
for all $\beta>0$. In particular, there exists positive constants $c$ and $T$ such that 
$$
A(t)\leq B(ct) \quad\text{for } t\geq T.
$$

Let $G$ be a Young function such that
\begin{equation} \label{condi}
\int_1^\infty \left( \frac{t}{G(t)} \right)^\frac{s}{n-s}\,dt = \infty, \qquad \int_0^1 \left( \frac{t}{G(t)} \right)^\frac{s}{n-s}\,dt <\infty,
\end{equation}
we define the critical function $G_*(t):=G(H^{-1}(t))$, where
$$
H(t)=\left( \int_0^t \left(\frac{\tau}{G(\tau)}\right)^\frac{s}{n-s} \,d\tau \right)^\frac{n-s}{n}.
$$

The appropriate compact embedding is contained in the following Theorem, whose proof can be found in \cite[Theorem 6.1]{Cianchi}.
\begin{thm} \label{embedding}
Let $G$ be a Young function satisfying \eqref{condi}. If the Young function $B$ grows essentially more slowly than $G_*$ near infinity, then the embedding $$W^s L^G(\Omega)\subset L^B(\Omega)$$ is compact for every bounded Lipschitz domain $\Omega$ in $\R^n$.
\end{thm}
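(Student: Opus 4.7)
The plan is to reduce the compactness assertion to the familiar three-step scheme: (i) establish the continuous critical embedding $W^sL^G(\Omega)\hookrightarrow L^{G_*}(\Omega)$; (ii) obtain precompactness of bounded sets of $W^sL^G(\Omega)$ in a weaker ambient space via a Fr\'echet--Kolmogorov criterion adapted to the Gagliardo seminorm; (iii) upgrade to compactness in $L^B(\Omega)$ using the fact that $B$ grows essentially more slowly than $G_*$.

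For step (i), I would follow a rearrangement approach. Given $u\in W^sL^G(\Omega)$, let $u^*$ denote its spherically symmetric decreasing rearrangement. A P\'olya--Szeg\H{o} type principle for the fractional Gagliardo seminorm yields $\Phi_{s,G}(u^*)\leq \Phi_{s,G}(u)$, reducing the problem to radial, non-increasing profiles on a ball. A direct computation in polar coordinates together with a one-dimensional Hardy-type inequality then gives $\|u\|_{L^{G_*}}\lesssim [u]_{s,G}$; here condition \eqref{condi} plays the essential role of making $H$ well defined and invertible so that $G_*=G\circ H^{-1}$ is a bona fide Young function.

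For step (ii), take $\{u_k\}$ bounded in $W^sL^G(\Omega)$. H\"older's inequality \eqref{eq.hol} on the bounded set $\Omega$ gives boundedness of $u_k$ in $L^1(\Omega)$. For equicontinuity in the mean, a standard estimate via Jensen's inequality applied to $G$ and the $s$-H\"older quotient $D_su_k$ yields
\[
\int_\Omega |u_k(x+h)-u_k(x)|\,dx \to 0 \quad \text{as } |h|\to 0,
\]
uniformly in $k$. The Fr\'echet--Kolmogorov theorem then produces a subsequence converging in $L^1(\Omega)$, and hence almost everywhere on $\Omega$.

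Finally, for step (iii), the hypothesis that $B$ grows essentially more slowly than $G_*$ ensures that for every $\lambda>0$ one has $B(t/\lambda)=o(G_*(t))$ as $t\to\infty$. Combined with the uniform bound on $\Phi_{G_*}(u_k)$ from step (i), this provides the equi-integrability required to upgrade the a.e.\ convergence from step (ii) to Luxemburg norm convergence in $L^B(\Omega)$ via Vitali's theorem. The main obstacle throughout is the absence of the $\Delta_2$ condition on $G$: reflexivity of $L^G$ fails, truncation and density arguments become delicate, and modular-versus-norm equivalences are unavailable. This forces the entire argument to rely on Vitali-type equi-integrability in place of dominated convergence, which is precisely why the \emph{strictly} slower growth of $B$ with respect to $G_*$ is a natural and, in a sense, optimal hypothesis.
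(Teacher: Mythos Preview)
The paper does not give its own proof of this theorem: it is stated with the remark that ``the proof can be found in \cite[Theorem 6.1]{Cianchi}'' and is quoted as a black box. So there is no in-paper argument to compare your proposal against.

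Your three-step outline (critical continuous embedding into $L^{G_*}$, precompactness in $L^1$ via a Fr\'echet--Kolmogorov argument, upgrade to $L^B$ via Vitali using the essentially slower growth of $B$) is the standard architecture for such results and is, in spirit, the route taken in the cited reference. That said, as written your proposal is a sketch rather than a proof: step (i) in particular is far from ``a direct computation in polar coordinates together with a one-dimensional Hardy-type inequality'' --- the sharp fractional Orlicz--Sobolev embedding in \cite{Cianchi} is substantial and does not reduce to a short polar-coordinate computation, and the P\'olya--Szeg\H{o} step you invoke requires $u$ extended by zero outside $\Omega$ and a careful treatment of the nonlocal seminorm without $\Delta_2$. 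Step (ii) also hides the real work: the uniform $L^1$ equicontinuity $\int_\Omega |u_k(x+h)-u_k(x)|\,dx\to 0$ from a bound on $\Phi_{s,G}(u_k)$ is exactly where the fractional structure enters, and ``Jensen's inequality applied to $G$'' alone does not deliver it. If you intend to supply a self-contained proof rather than cite \cite{Cianchi}, those two places need genuine arguments.
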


In particular we can take $B=G$ in the previous theorem giving the following.
\begin{prop} \label{embed}
Under the hypotheses of the previous theorem,  the embedding of $W^sL^G(\Omega)$ into $L^G(\Omega)$ is compact.
\end{prop}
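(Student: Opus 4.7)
The plan is to apply Theorem \ref{embedding} with $B = G$; it then suffices to verify that $G$ itself grows essentially more slowly than the critical function $G_*(t) = G(H^{-1}(t))$ near infinity, i.e.\ $\lim_{t\to\infty} G(t)/G_*(\beta t) = 0$ for every $\beta > 0$.

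The crux of the argument is the asymptotic bound $H^{-1}(\beta t)/t \to \infty$ as $t\to\infty$. Since $G$ is a Young function, property $(g_3)$ combined with convexity of $G$ yields $G(t)/t \to \infty$, hence $(\tau/G(\tau))^{s/(n-s)} \to 0$ as $\tau\to\infty$. Splitting the integral defining $H$ into a piece on $[0,T_\varepsilon]$ (bounded) and one on $[T_\varepsilon,t]$ (where the integrand is below $\varepsilon$), we get $\int_0^t (\tau/G(\tau))^{s/(n-s)}\,d\tau = o(t)$, and therefore $H(t) = o(t^{(n-s)/n})$. Inverting this asymptotic estimate gives $H^{-1}(y) \geq (y/\delta)^{n/(n-s)}$ for any fixed $\delta > 0$ and all $y$ large, from which $H^{-1}(\beta t)/t \to \infty$ follows immediately (since $n/(n-s) > 1$).

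To close the argument, set $\alpha_t := H^{-1}(\beta t)/t$, which exceeds $1$ for $t$ sufficiently large. Applying \eqref{G2} with $\alpha = \alpha_t$ yields $G(H^{-1}(\beta t)) = G(\alpha_t t) \geq \alpha_t G(t)$, so $G(t)/G_*(\beta t) \leq 1/\alpha_t \to 0$, as desired, and Theorem \ref{embedding} provides the compact embedding. The main obstacle lies precisely in obtaining the sharp decay $H(t) = o(t^{(n-s)/n})$: without the $\Delta_2$ condition, the convexity estimate \eqref{G2} only furnishes the linear lower bound $G(\alpha t) \geq \alpha G(t)$, so we cannot afford $H^{-1}(\beta t)/t$ to remain merely bounded; a bare $H^{-1}(\beta t) \geq t$ would not suffice to drive $G(H^{-1}(\beta t))/G(t)$ to infinity.
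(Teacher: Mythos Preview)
Your proof is correct. The paper follows the same overall plan---apply Theorem~\ref{embedding} with $B=G$ after checking that $G$ grows essentially more slowly than $G_*$---but handles the asymptotic comparison a bit differently. Instead of proving the sharp estimate $H(t)=o\bigl(t^{(n-s)/n}\bigr)$ and then inverting, the paper settles for the cruder bound $H(t)\le C\,t^{(n-s)/n}$, obtained from the monotonicity of $\tau/G(\tau)$ (so that the integrand defining $H$ is bounded on $[1,\infty)$). It then substitutes $t\mapsto H^{-1}(\beta t)$ to rewrite the target limit as $\lim_{t\to\infty}G(\beta^{-1}H(t))/G(t)\le \lim_{t\to\infty}G(c\,t^{1-s/n})/G(t)$, and finishes with L'H\^opital---or, equivalently and more cleanly, with \eqref{G1}, since $c\,t^{1-s/n}/t=c\,t^{-s/n}\to0$. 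Thus your concern that ``a bare $H^{-1}(\beta t)\ge t$ would not suffice'' is valid within your framework but is bypassed entirely by the paper, which works on the $H$ side and exploits the \emph{other} half of the convexity inequality. Both routes are short; yours avoids L'H\^opital and is arguably more transparent, while the paper's needs only the $O$ bound on $H$ rather than the $o$ bound.
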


\begin{proof}
Since $H(t)$ is increasing, given a fixed $\beta>0$ we have
$$
\lim_{t\to\infty} \frac{G( t)}{G(H^{-1}(\beta t))} = \lim_{t\to\infty} \frac{G(\beta^{-1} H(t))}{G(t)} .
$$
Since $t/G(t)$ is non-increasing,
$$
H(t)\leq \left(\int_0^t \frac{d\tau}{G(d\tau)^\frac{s}{n-s}} \right)^\frac{n-s}{s}\leq G(1)^{-\frac{s}{n}}  t^{1-\frac{s}{n}} 
$$
from where
\begin{align*}
\lim_{t\to\infty} \frac{G( t)}{G(H^{-1}(\beta t))} &\leq 
\lim_{t\to\infty} \frac{G(c t^{1-\frac{s}{n}})}{G(t)} = c(1-\frac{s}{n}) \lim_{t\to\infty} t^{-\frac{s}{n}} \frac{g(c t^{1-\frac{s}{n}})}{g(t)} \\
&\leq  c(1-\frac{s}{n})\lim_{t\to\infty} t^{-\frac{s}{n}} =0
\end{align*}
where $c=\beta^{-1} G(1)^{-\frac{s}{n}}$.

Therefore the result follows from Theorem \ref{embedding}.

\end{proof}

\subsection{A Poincar\'e's inequality}

The following Poincar\'e inequality will be of use, and has independent interest:
\begin{prop} \label{eq.poincare}
Let $\Omega\subset \R^n$ be open and bounded and let $G$ be a Young function. Then for $s\in(0,1)$ it holds that
$$
\Phi_G(u) \le   \Phi_{s,G}(C \d^s u)
$$
and
$$
\|u\|_G \leq C \d^s [u]_{s,p}
$$
for all $u\in W^s_0 L^G(\Omega)$, where $C=C(n,s)$ and $\d$ denotes the diameter of $\Omega$.
\end{prop}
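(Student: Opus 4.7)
The plan is a zero-extension argument adapted from the classical fractional Poincar\'e inequality in $W^{s,p}$, with a final convexity step to handle the fact that $G$ is not a power.

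First I would extend $u$ by zero to all of $\R^n$ (which is legitimate since $u\in W^s_0L^G(\Omega)$). Fix any $x\in\Omega$; because $\mathrm{diam}(\Omega)=\d$ one has $\Omega\subset \overline{B(x,\d)}$, and therefore the set $A(x):=B(x,2\d)\setminus\Omega$ contains the annulus $B(x,2\d)\setminus B(x,\d)$, giving a dimensional lower bound $|A(x)|\geq c_n\d^n$. For $y\in A(x)$ one has $u(y)=0$ and $|x-y|\leq 2\d$, so
\[
|u(x)|=|u(x)-u(y)|=|D_s u(x,y)|\,|x-y|^s\leq (2\d)^s|D_s u(x,y)|,
\]
and monotonicity of $G$ yields $G(|u(x)|)\leq G(2^s\d^s|D_s u(x,y)|)$.

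Next I average in $y$ over $A(x)$ and introduce the Gagliardo measure. Since $|x-y|\leq 2\d$ on $A(x)$, the elementary inequality $|A(x)|^{-1}\leq 2^n c_n^{-1}|x-y|^{-n}$ converts the flat average into one against $d\mu$:
\[
G(|u(x)|)\leq \frac{2^n}{c_n}\int_{A(x)}G(2^s\d^s|D_s u(x,y)|)\,\frac{dy}{|x-y|^n}.
\]
Integrating in $x\in\Omega$ and enlarging the $y$-integration from $A(x)\subset\R^n\setminus\Omega$ to all of $\R^n$ (admissible because $G\geq 0$) gives
\[
\Phi_G(u)\leq \frac{2^n}{c_n}\iint_{\R^n\times\R^n}G\bigl(2^s\d^s|D_s u|\bigr)\,d\mu.
\]

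It remains to absorb the constant $M:=2^n/c_n$ into the argument of $G$. If $M\leq 1$ there is nothing to do; if $M>1$ I apply \eqref{G2} (i.e.\ $MG(t)\leq G(Mt)$ for $M\geq 1$, which is just convexity together with $G(0)=0$) to obtain
\[
\Phi_G(u)\leq \iint_{\R^n\times\R^n} G\bigl(C\d^s|D_s u|\bigr)\,d\mu = \Phi_{s,G}(C\d^s u)
\]
with $C:=\max\{2^s,\,M2^s\}$, which is the first assertion. The norm inequality then follows by the definition of the Luxemburg norm: applying the modular estimate just proved to $u/\lambda$ yields $\Phi_G\bigl(u/(C\d^s\lambda)\bigr)\leq \Phi_{s,G}(u/\lambda)$, so every $\lambda$ admissible in the infimum defining $[u]_{s,G}$ produces an admissible $C\d^s\lambda$ in the infimum defining $\|u\|_G$, whence $\|u\|_G\leq C\d^s[u]_{s,G}$.

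The only step where the non-power nature of $G$ really enters is the final absorption of the multiplicative constant via \eqref{G2}, and I expect this to be the subtlest point, since no $\Delta_2$ hypothesis is available to simplify manipulations with $G$; everything else is essentially the geometric argument familiar from the $W^{s,p}$ case.
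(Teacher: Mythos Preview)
Your proof is correct and follows essentially the same approach as the paper: both arguments restrict attention to the annulus $\d\leq|x-y|\leq 2\d$ around $x\in\Omega$, where $u(y)=0$ by the zero extension, and both rely on the convexity inequality \eqref{G2} to absorb the dimensional multiplicative constant into the argument of $G$; the passage to the norm inequality via the Luxemburg definition is identical. The only cosmetic difference is that the paper argues by bounding $\Phi_{s,G}(u)$ from below over the annulus, whereas you bound $\Phi_G(u)$ from above by averaging over $A(x)$ and then enlarging the domain of integration---these are two sides of the same computation.
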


\begin{proof}
Given $x\in \Omega$, observe that when $|x-y|\geq \d$, then $y\notin \Omega$. Hence 
$$
\Phi_{s,G}(u) \geq 
\int_\Omega \int_{\d^s \leq |x-y|\leq 2\d^s} G\Big( \frac{  \d^s  }{|x-y|^s} \frac{ |u(x)|}{\d^s}  \Big) \frac{dydx}{|x-y|^n}.
$$
Since $\frac{2\d^s}{|x-y|^s}\geq 1$, from \eqref{G2} we get that the expression above is greater than
$$
\int_\Omega \int_{\d^s \leq |x-y|\leq 2\d^s} G\Big( \frac{|u(x)|}{2\d^s}  \Big)  \d^s \frac{dydx}{|x-y|^{n+s}},
$$
which can be written as
$$
\int_\Omega  G\Big( \frac{|u(x)|}{2\d^s}  \Big) \,dx    \int_{\d^s \leq |z|\leq 2\d^s} \d^s \frac{dz}{|z|^{n+s}},
$$
and moreover, using polar coordinates it is equal to 
$$
 \frac{n\omega_n}{2s} \int_\Omega  G\Big( \frac{|u(x)|}{2\d^s}  \Big) \,dx.
$$
The last four expressions lead to
$$
\Phi_{s,G}(u) \geq  \frac{n\omega_n}{2s} \int_\Omega  G\Big( \frac{|u(x)|}{2\d^s}  \Big) \,dx.
$$
Finally, by using \eqref{G2}, if we denote $C^{-1}:= \min\{1,\frac{n\omega_n}{2s}\}$ we get
$$
\Phi_{s,G}(u) \geq   \int_\Omega  G\Big( \frac{|u(x)|}{2C\d^s}  \Big) \,dx,
$$
or, equivalently, $\Phi_G(u) \leq \Phi_{s,G}\left(2C\d^s u\right)$. Finally, since
$$
\Phi_G\left(\frac{u}{2C\d^s[u]_{s,G}}\right) \leq \Phi_{s,G}\left(\frac{u}{[u]_{s,G}}\right),
$$
by definition of the Luxemburg norm we obtain that $\|u\|_G \leq 2C\d^s [u]_{s,G}$ and the proof concludes.
\end{proof}
 
A standard consequence of the Poincar\'e inequality is the following:
\begin{cor}
In light of Proposition \ref{eq.poincare}, $[\cdot]_{s,G}$ is an equivalent norm in $W^s_0 L^G(\Omega)$.
\end{cor}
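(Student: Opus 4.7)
The plan is to deduce this directly from the Poincaré inequality of Proposition \ref{eq.poincare} together with the already-established fact that $[\cdot]_{s,G}$ is a seminorm on $W^sL^G(\Omega)$ (it is a Luxemburg-type functional built from the convex modular $\Phi_{s,G}$, so homogeneity and the triangle inequality follow by standard arguments).

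First, I would verify that on $W^s_0L^G(\Omega)$ the functional $[\cdot]_{s,G}$ is actually a norm and not just a seminorm. The only non-trivial point is definiteness: if $[u]_{s,G}=0$, then Proposition \ref{eq.poincare} gives $\|u\|_G \le C\d^s\,[u]_{s,G}=0$, so $u=0$ in $L^G(\Omega)$ and hence in $W^s_0L^G(\Omega)$.

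Next I would establish the two-sided comparison with $\|\cdot\|_{s,G}$. One direction is immediate from the definition
\[
\|u\|_{s,G} = \|u\|_G + [u]_{s,G} \ge [u]_{s,G}.
\]
For the reverse direction, Proposition \ref{eq.poincare} yields
\[
\|u\|_{s,G} = \|u\|_G + [u]_{s,G} \le C\d^s [u]_{s,G} + [u]_{s,G} = (1+C\d^s)\,[u]_{s,G},
\]
with $C=C(n,s)$ as in that Proposition and $\d=\mathrm{diam}(\Omega)$. Combining these two chains of inequalities we get
\[
[u]_{s,G} \le \|u\|_{s,G} \le (1+C\d^s)\,[u]_{s,G}
\]
for every $u\in W^s_0L^G(\Omega)$, which is exactly the equivalence of norms.

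There is essentially no obstacle here: the statement is a routine consequence of Poincaré, and the only point deserving a brief comment is that the Luxemburg seminorm $[\cdot]_{s,G}$ becomes a genuine norm once restricted to the zero-trace class $W^s_0L^G(\Omega)$, which is exactly what the Poincaré bound provides.
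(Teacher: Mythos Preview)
Your proof is correct and is exactly the standard argument the paper has in mind; the paper does not actually write out a proof but simply labels the corollary as ``a standard consequence of the Poincar\'e inequality,'' which is precisely the two-sided comparison you derive.
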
 

\section{The minimization problem} \label{sec.min}

In this section we prove Theorem \ref{thm.min}; notice that the functionals $\F$ and  $\J$ coincide with the modulars defining the Orlicz spaces:
$$
\F(u)=\Phi_{s,G}(u), \qquad \J(u)=\Phi_G(u).
$$

We rewrite slightly \eqref{eq.min} as follows: let
\[
M_\alpha:=\{ u \in W^s_0 L^G (\Omega)\colon \J(u)=\alpha\}
\]
and 
\begin{equation} \label{minim.prob}
\Lambda_\alpha=\min_{u\in M_\alpha} \F(u).
\end{equation}

We start proving some properties on the functional $\F$.
\begin{prop} \label{prop.coerciva}
The functional $\F$ is coercive.
\end{prop}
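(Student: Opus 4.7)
The plan is to reduce coercivity of $\F$ with respect to the full norm $\|\cdot\|_{s,G}$ to coercivity with respect to the Gagliardo seminorm $[\cdot]_{s,G}$, using the equivalence of the two norms on $W^s_0 L^G(\Omega)$ granted by the Corollary to Proposition \ref{eq.poincare}. Thus it suffices to show that $\Phi_{s,G}(u_n)\to\infty$ whenever $[u_n]_{s,G}\to\infty$.

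The key ingredient is the standard modular--seminorm inequality
\[
\Phi_{s,G}(u)\ge [u]_{s,G}\qquad \text{whenever } [u]_{s,G}>1,
\]
which I would establish using only convexity of $G$ (and therefore without appealing to $\Delta_2$). Set $\lambda:=[u]_{s,G}$ and fix any $\mu$ with $1<\mu<\lambda$. By the very definition of the Luxemburg-type seminorm, $\Phi_{s,G}(u/\mu)>1$ since $\mu$ is below the infimum $\lambda$. Applying \eqref{G2} pointwise with $\beta=\mu>1$ and $t=|D_s u|/\mu$ yields $G(|D_s u|)\ge \mu\,G(|D_s u|/\mu)$; integrating against $d\mu$ gives $\Phi_{s,G}(u)\ge \mu\,\Phi_{s,G}(u/\mu)>\mu$. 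Letting $\mu\nearrow \lambda$ then produces $\Phi_{s,G}(u)\ge \lambda=[u]_{s,G}$.

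Putting the two pieces together, if $\|u_n\|_{s,G}\to\infty$ the Poincar\'e-based equivalence forces $[u_n]_{s,G}\to\infty$, so eventually $[u_n]_{s,G}>1$ and hence $\F(u_n)=\Phi_{s,G}(u_n)\ge [u_n]_{s,G}\to\infty$. This proves coercivity.

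There is essentially no substantial obstacle: once Proposition \ref{eq.poincare} is in hand and the convexity-based relation between the modular and the Luxemburg seminorm is recorded, the statement is immediate. The only point requiring care is that the argument must avoid appealing to $\Delta_2$, which is precisely why the proof uses the ``$\Phi_{s,G}(u)>\mu$ for all $1<\mu<\lambda$'' trick instead of attempting to assert the equality $\Phi_{s,G}(u/[u]_{s,G})=1$ (which may fail without doubling).
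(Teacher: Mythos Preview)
Your proof is correct and follows essentially the same approach as the paper: both establish the modular--seminorm inequality $\Phi_{s,G}(u)\ge [u]_{s,G}$ for $[u]_{s,G}>1$ by a convexity/Luxemburg-infimum argument (the paper uses \eqref{G1} with $\alpha=(1+\varepsilon)/[u]_{s,G}$, you use the equivalent \eqref{G2} with $\beta=\mu$), and then conclude coercivity. Your write-up is slightly more explicit in invoking the Poincar\'e-based norm equivalence to pass from $[\cdot]_{s,G}$ to $\|\cdot\|_{s,G}$, which the paper leaves implicit; as a cosmetic point, your auxiliary parameter $\mu$ clashes with the paper's measure $d\mu$, so a different letter would read better.
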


\begin{proof}
Assume that $[u]_{s,G}>1+\ve$ for some  $\ve>0$, then from $(G_1)$ and the definition of the Luxemburg norm we get
$$
\frac{1+\ve}{[u]_{s,G}} \Phi_{s,G}(u) \geq \Phi_{s,G} \left( \frac{(1+\ve)u}{[u_{s,G}]} \right) >1,
$$
from where
$$
\Phi_{s,G}(u) > \frac{[u]_{s,G}}{1+\ve}.
$$
Hence, by the arbitrariness of $\ve$ we get that $\Phi_{s,G}(u)\geq [u]_{s,G}$ for all $u\in W^s_0 L^G(\Omega)$, giving the desired coercivity of $\F$.
\end{proof}

\begin{prop} \label{prop.wlsc}
The functional $\F$ is weak* lower semicontinuous.
\end{prop}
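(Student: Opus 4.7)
The plan is to bypass the weak{*} topology directly and instead route everything through pointwise a.e.\ convergence, so that the nonnegativity of $G(|D_s u|)$ allows Fatou's lemma to deliver the semicontinuity.

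Concretely, I would start with a sequence $u_n \cde u$ in $W^s_0 L^G(\Omega)$ and set $\alpha := \liminf_n \F(u_n)$; passing to a subsequence I may assume $\F(u_n) \to \alpha$. Under the identification $u \mapsto (u, D_s u)$ of $W^s_0L^G(\Omega)$ as a weak{*} closed subspace of the dual $(E^{\tilde G}(\Omega)\times E^{\tilde G}(d\mu))'$ (see Subsection \ref{ssec.forl}), testing against $(\phi,0)$ and $(0,\psi)$ separately splits the weak{*} convergence into $u_n \cde u$ in $L^G(\Omega)$ and $D_s u_n \cde D_s u$ in $L^G(\R^n\times\R^n,d\mu)$. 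Banach--Steinhaus then gives boundedness of $(u_n)$ in $W^s_0L^G(\Omega)$.

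Next I would invoke the compact embedding of Proposition \ref{embed} to pass to a further subsequence with $u_n \to u$ strongly in $L^G(\Omega)$, the limit being forced to coincide with $u$ by uniqueness of weak{*} limits. From the very definition of the Luxemburg norm, a Markov-type estimate
\[
|\{|u_n - u| > \delta\}| \leq \bigl(G\bigl(\delta/(2\|u_n - u\|_G)\bigr)\bigr)^{-1}
\]
(valid without any $\Delta_2$ assumption, since $G(t)\to\infty$) upgrades norm convergence to convergence in measure, and hence to a.e.\ convergence along yet another subsequence. Extending by zero outside $\Omega$ and then applying Fubini to the numerator $u_n(x)-u_n(y)$ gives $D_s u_n \to D_s u$ a.e.\ on $\R^n \times \R^n$ with respect to Lebesgue measure, hence with respect to $d\mu$.

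Fatou's lemma applied to the nonnegative integrand $G(|D_s u_n|)$ then yields
\[
\F(u) = \iint_{\R^n\times\R^n} G(|D_s u|)\, d\mu \leq \liminf_n \iint_{\R^n\times\R^n} G(|D_s u_n|)\, d\mu = \alpha,
\]
for the extracted subsequence; running the same extraction on every subsequence of the original one promotes this to sequential weak{*} lower semicontinuity (and, since $\F$ is convex and the relevant predual is separable, this in turn gives weak{*} lower semicontinuity). The one genuine obstacle I foresee is the step from weak{*} convergence of $D_s u_n$ (which a priori carries no pointwise information) to its a.e.\ convergence: the whole point of the above scheme is to channel compactness through $u_n$, where Proposition \ref{embed} applies, and only afterwards reconstruct the Hölder quotients, so that the remainder is essentially bookkeeping.
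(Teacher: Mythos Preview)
Your proposal is correct and follows essentially the same route as the paper: weak{*} convergence gives boundedness, the compact embedding of Proposition~\ref{embed} yields strong $L^G$ (hence a.e.) convergence of a subsequence, this transfers to a.e.\ convergence of the H\"older quotients $D_s u_n$, and Fatou finishes. You have simply been more explicit than the paper about the Markov/convergence-in-measure step, the Fubini passage to $D_s u_n$, and the subsequence bookkeeping.
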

\begin{proof}
Consider a sequence $\{u_k\}_{k\in\N}\subset W^s_0 L^G(\Omega)$ such that $u_k \cde u$ in $W^s_0 L^G(\Omega)$. Since $W^s_0 L^G(\Omega)$ is a weak* closed subspace of the dual of a separable space, we have the following convergences
$$
\int_\Omega u_k \varphi\,dx \to \int_\Omega u \varphi\,dx, \qquad \langle (-\Delta_g)^s u_k,\varphi\rangle \to \langle (-\Delta_g)^s u,\varphi\rangle \qquad \forall \varphi \in E^{\tilde G}(\Omega).
$$
In particular, this holds for all $\varphi \in L^\infty(\Omega)$ and hence $u_k \cd u$ weakly in $W^s L^1(\Omega)$.

Since $\{u_k\}_{k\in\N}$ is weakly* convergent, it is   bounded  in $W^s_0 L^G(\Omega)$, therefore due to the compact embedding given in Proposition \ref{embed}, there exists $v\in W^s_0 L^G(\Omega)$ such that
\begin{align*}
u_k \longrightarrow v &\quad \text{ strongly  in } L^G(\Omega),\\
u_k \longrightarrow v &\quad \text{ a.e. in } \R^n.
\end{align*}
As a consequence, we conclude that $u=v$ and obtain that
$$
G(|D_s u_k|) \longrightarrow G(|D_s u|) \quad \text{ a.e. in } \R^n.
$$
Then, by using Fatou's Lemma we conclude that
$$
\F(u)=\iint_{\R^n\times\R^n} G(|D_s u|)\, d\mu \leq 
\liminf_{k\to\infty }\iint_{\R^n\times\R^n} G(|D_s u_k|)\, d\mu =\liminf_{k\to\infty} \F(u_k)
$$
and the proof finishes.
\end{proof}

The following two lemmas are the key to prove that the set $M_\alpha$ is sequentially weak* closed. Observe that $K^G(\Omega)$ in general is not a vector space when $G$ does not satisfy the $\Delta_2$ condition, however, the following holds:

\begin{lem}\label{lema.aux.1}
Given a Young function $G$ and $v\in W^s L^G(\Omega)$, then $\delta v \in K^G(\Omega)$ for every $\delta>0$.
\end{lem}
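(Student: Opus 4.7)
The content of the lemma is really that $v$ lives in the subspace $E^G(\Omega)\subset L^G(\Omega)$, because $\delta v \in K^G(\Omega)$ for every $\delta>0$ is equivalent to the statement $\int_\Omega G(c|v|)\,dx<\infty$ for every $c>0$. For a generic element of $L^G(\Omega)$ the latter is guaranteed only in the range $c\leq 1/\|v\|_G$, so the Sobolev-type regularity encoded in $v\in W^sL^G(\Omega)$ must be the decisive ingredient.

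My plan is a two-step domination argument built on the fractional Orlicz-Sobolev embedding. In the first step I appeal to the critical continuous embedding $W^s L^G(\Omega)\hookrightarrow L^{G_*}(\Omega)$, the companion of the compact embeddings in Theorem \ref{embedding}, available from Cianchi's paper cited there under the structural hypothesis \eqref{condi}. This produces some $\lambda_0>0$ such that
\[
\int_\Omega G_*\!\left(\frac{|v|}{\lambda_0}\right)dx\leq 1.
\]
In the second step I use that $G$ grows essentially more slowly than its critical partner $G_*$. Indeed the estimate $H(t)\leq G(1)^{-s/n}t^{(n-s)/n}$ worked out in the proof of Proposition \ref{embed} gives $G_*(t)\geq G(c\,t^{n/(n-s)})$ for $t$ large, hence $\lim_{t\to\infty}G(\delta t)/G_*(t/\lambda_0)=0$ for every fixed $\delta>0$. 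Choose a threshold $T=T(\delta)$ with $G(\delta t)\leq G_*(t/\lambda_0)$ whenever $t\geq T$; splitting the integral at the level set $\{|v|=T\}$ yields
\[
\int_\Omega G(\delta|v|)\,dx\leq G(\delta T)\,\mathcal{L}^n(\Omega)+\int_\Omega G_*\!\left(\frac{|v|}{\lambda_0}\right)dx<\infty,
\]
which is exactly $\delta v\in K^G(\Omega)$.

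The principal technical obstacle is securing the critical embedding $W^sL^G\hookrightarrow L^{G_*}$, because Section \ref{ssec.comp} only records the strictly subcritical compact embeddings into $L^B$ with $B$ growing essentially more slowly than $G_*$. Once that ingredient is in hand the remainder is a routine domination estimate using monotonicity of $G$ and the finiteness of $\mathcal{L}^n(\Omega)$. If the author prefers to stay inside the tools already collected in the excerpt, a direct substitution of $B_\delta(t):=G(\delta t)$ into Theorem \ref{embedding} will \emph{not} suffice, since the Luxemburg norm scales as $\|v\|_{B_\delta}=\delta\|v\|_G$ and therefore yields no genuinely new information; the critical Sobolev step (or an equivalent modular Sobolev inequality $\int G_*(|v|/C)\,dx\leq C'\iint G(|D_sv|)\,d\mu$) really is needed.
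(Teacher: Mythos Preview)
Your proof is correct and follows essentially the same route as the paper's: invoke the critical embedding $W^sL^G(\Omega)\hookrightarrow L^{G_*}(\Omega)$ from \cite{Cianchi} (the paper also uses exactly this, citing it as ``Proposition~\ref{embedding}'' even though Theorem~\ref{embedding} as stated covers only the subcritical compact case), use that $G$ grows essentially more slowly than $G_*$, and split the modular integral at a threshold level set to dominate by $G(\delta T)\,|\Omega|+\int_\Omega G_*(|v|/\lambda_0)\,dx$. Your closing observation that the subcritical embedding alone with $B_\delta(t)=G(\delta t)$ cannot work, since $\|v\|_{B_\delta}=\delta\|v\|_G$ gives no new modular information, is correct and sharpens the exposition.
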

\begin{proof}
Let $v\in W^s L^G(\Omega)$ and let $\delta>0$ be fixed. In view of Proposition \ref{embedding}, $v\in L^{G_*}(\Omega)$. Moreover, by Proposition \ref{embed}, $G$ grows essentially more  slowly than $G_*$, from where we can choose positive constants $K$ and $T$ such that $\|v\|_{G_*} \leq K$ and
$$
G(t) \leq G_*\left( \frac{t}{\delta K} \right) \quad \text{for } t\geq T.
$$
Consider now the subset 
$$
\Omega_K =\left\{  x\colon v(x)\leq \frac{T}{\delta}\right\}.
$$
Then, these observations together with the definition of the Luxemburg norm give
\begin{align*}
\int_\Omega G(\delta |v(x)|)\,dx &\leq \int_{\Omega_K} G(\delta|v(x)|)\,dx + \int_{\Omega\setminus \Omega_K} G_*\left( \frac{|v(x)|}{K} \right)\,dx\\
&\leq G(T) |\Omega| + \int_\Omega G_*\left( \frac{|v(x)|}{K} \right)\,dx\\
&\leq G(T) |\Omega| + 1,
\end{align*}
that is, $\delta v\in K^G(\Omega)$.  
\end{proof}

\begin{lem} \label{lema.aux}
Given a Young function $G$ and $v\in W^s L^G(\Omega)$, we have that 
$$
\int_\Omega \tilde G(g(|v|)\,dx < \infty, 
$$
that is, $g(|v|) \in K^{\tilde G}(\Omega)$.
\end{lem}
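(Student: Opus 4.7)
The plan is to reduce the claim to Lemma \ref{lema.aux.1} by means of a pointwise inequality between $\tilde G \circ g$ and $G$. The key identity, which holds for any Young function $G$ with right-continuous derivative $g$, is the \emph{Young equality}
\[
tg(t) = G(t) + \tilde G(g(t)) \qquad \text{for all } t\geq 0,
\]
which is the equality case in Young's inequality $ab\leq G(a)+\tilde G(b)$ and can be read off directly from the geometric interpretation of $G$ and $\tilde G$ as the areas under $g$ and $g^{-1}$ respectively (cf.\ \eqref{eq.tildeg}). In particular,
\[
\tilde G(g(t))=tg(t)-G(t)\leq tg(t).
\]

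Next, I would combine this with relation \eqref{rel} from the preliminaries, which states that $tg(t)\leq G(2t)$. Chaining the two inequalities yields the pointwise bound
\[
\tilde G(g(t))\leq G(2t)\qquad \text{for all } t\geq 0.
\]
Integrating this at $t=|v(x)|$ over $\Omega$ gives
\[
\int_\Omega \tilde G\bigl(g(|v|)\bigr)\,dx \leq \int_\Omega G(2|v|)\,dx.
\]

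Finally, to conclude finiteness of the right-hand side, I would invoke Lemma \ref{lema.aux.1} with $\delta=2$: since $v\in W^s L^G(\Omega)$, that lemma guarantees $2v\in K^G(\Omega)$, i.e.\ $\int_\Omega G(2|v|)\,dx<\infty$. Hence $\int_\Omega \tilde G(g(|v|))\,dx<\infty$ and $g(|v|)\in K^{\tilde G}(\Omega)$, as required.

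There is no substantial obstacle in this argument; the only point requiring care is the Young equality (not merely Young's inequality), but this is classical and valid without any $\Delta_2$ assumption on $G$ or $\tilde G$. The essential input is Lemma \ref{lema.aux.1}, whose use of the compact embedding into $L^{G_\ast}(\Omega)$ is what allows us to absorb an arbitrary scaling factor $\delta>0$ in the absence of the doubling condition; everything else is a straightforward pointwise estimate.
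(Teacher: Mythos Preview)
Your proof is correct and essentially the same as the paper's. Both arguments reduce to the pointwise bound $\tilde G(g(t))\leq tg(t)\leq G(2t)$ and then invoke Lemma~\ref{lema.aux.1} with $\delta=2$; the only cosmetic difference is that you obtain $\tilde G(g(t))\leq tg(t)$ via the Young equality, whereas the paper bounds $\int_0^{g(t)} g^{-1}(\tau)\,d\tau\leq tg(t)$ directly from the integral representation \eqref{eq.tildeg} and the monotonicity of $g$.
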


\begin{proof}
Let $v\in W^s L^G(\Omega)$. By Lemma \ref{lema.aux.1} it follows that $2v \in K^G(\Omega)$. Therefore, using \eqref{eq.tildeg}, the fact that $g$ is increasing and \eqref{rel} we get
\begin{align*}
\int_\Omega \tilde G(g(|v|))\,dx &\leq \int_\Omega \left( \int_0^{g(|v|)} g^{-1}(\tau)\,d\tau \right)\,dx\\
&\leq \int_\Omega |v|g(|v|)\,dx \leq \int_\Omega G(2|v|)\,dx
\end{align*}
giving that $g(|v|)\in K^{\tilde G}(\Omega)$.
\end{proof}

\begin{prop} \label{prop.mmu}
The set $M_\alpha$ is sequentially weak* closed.
\end{prop}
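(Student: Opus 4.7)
The plan is to take $\{u_k\}\subset M_\alpha$ with $u_k\cde u$ in $W^s_0 L^G(\Omega)$ and show that $\mathcal{J}(u)=\alpha$, which places $u$ in $M_\alpha$. The opening step parallels the proof of Proposition \ref{prop.wlsc}: weak* convergent sequences are bounded in $W^s_0 L^G(\Omega)$, so Proposition \ref{embed} lets me pass to a (not relabeled) subsequence satisfying $u_k\to u$ strongly in $L^G(\Omega)$ and a.e. in $\Omega$. Continuity of $G$ then gives $G(|u_k|)\to G(|u|)$ a.e., and it suffices to upgrade this pointwise convergence into $L^1(\Omega)$-convergence, since $\mathcal{J}(u_k)=\alpha$ will then force $\mathcal{J}(u)=\alpha$.

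The heart of the argument is to apply Vitali's convergence theorem, whose only nontrivial hypothesis is the equi-integrability of $\{G(|u_k|)\}$. I would secure it via the de la Vallée-Poussin criterion by exploiting the strictly better integrability afforded by the Sobolev-type embedding. Concretely, select a Young function $B$ with $G\ll B\ll G_*$ in the sense of Subsection \ref{ssec.comp}; such a $B$ exists because the growth gap between $G$ and $G_*$ enforced by \eqref{condi} is genuine. Theorem \ref{embedding} then yields the compact embedding $W^s L^G(\Omega)\hookrightarrow L^B(\Omega)$, so there is a constant $C>0$ with $\|u_k\|_B\leq C$ for all $k$, and by definition of the Luxemburg norm,
\[
\sup_k \int_\Omega B\!\left(\frac{|u_k|}{C}\right)dx \leq 1.
\]

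Now set $\Psi(s):=B(G^{-1}(s)/C)$. The relation $G\ll B$ translates, after the substitution $t=G^{-1}(s)$, into $\Psi(s)/s = B(t/C)/G(t)\to\infty$ as $s\to\infty$, while
\[
\int_\Omega \Psi(G(|u_k|))\,dx \;=\; \int_\Omega B\!\left(\frac{|u_k|}{C}\right)dx \;\leq\; 1
\]
uniformly in $k$. De la Vallée-Poussin therefore gives equi-integrability of $\{G(|u_k|)\}$, and Vitali's theorem produces $\mathcal{J}(u_k)\to\mathcal{J}(u)$, so that $\mathcal{J}(u)=\alpha$ and $u\in M_\alpha$.

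The main obstacle I anticipate is precisely the absence of a direct bridge between norm convergence and modular convergence in $L^G$ when $\Delta_2$ fails: strong convergence $u_k\to u$ in $L^G$ does \emph{not} in general give $\int_\Omega G(|u_k-u|)\,dx\to 0$, so Proposition \ref{embed} alone cannot close the argument. Routing through the intermediate Orlicz class $L^B$ sidesteps this issue, but one must verify that such a $B$ can be chosen convex (an explicit choice like $B=\sqrt{G\,G_*}$ requires checking convexity, or alternatively one builds $B$ piecewise) and track the normalizing constant $C$ carefully when assembling the de la Vallée-Poussin test function $\Psi$.
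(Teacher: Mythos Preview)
Your argument is correct and takes a genuinely different route from the paper's. The paper estimates $|\J(u_k)-\J(u)|$ directly by the mean value inequality
\[
|\J(u_k)-\J(u)|\le \int_\Omega g\big(|u|+|u_k-u|\big)\,|u_k-u|\,dx \le 2\big\|g(|u|+|u_k-u|)\big\|_{\tilde G}\,\|u_k-u\|_G,
\]
and then spends Lemmas~\ref{lema.aux.1} and~\ref{lema.aux} (and their uniform versions) to bound the $L^{\tilde G}$ norm of the first factor; the compact embedding kills the second factor. Your approach replaces this with a Vitali/de~la~Vall\'ee--Poussin argument: higher integrability in $L^B$ gives equi-integrability of $\{G(|u_k|)\}$, and a.e.\ convergence does the rest. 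Both proofs ultimately feed on the same Sobolev gain (the paper through $L^{G_*}$ in Lemma~\ref{lema.aux.1}, you through $L^B$), but your packaging is cleaner and avoids the complementary-function bookkeeping entirely.

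Two small points you should nail down. First, the existence of an intermediate Young function $B$ with $G\ll B\ll G_*$ is true but is a separate lemma (it is in Krasnosel'ski\u{\i}--Rutitski\u{\i}, \cite{KR}); it does \emph{not} follow directly from \eqref{condi}. A simpler fix is to dispense with $B$ altogether: the continuous embedding $W^sL^G(\Omega)\hookrightarrow L^{G_*}(\Omega)$ from \cite{Cianchi} already gives $\sup_k\|u_k\|_{G_*}\le C$, and since Proposition~\ref{embed} shows $G\ll G_*$, you may run your de~la~Vall\'ee--Poussin argument with $\Psi(s)=G_*(G^{-1}(s)/C)$ directly. Second, your test function $\Psi$ is increasing but not obviously convex; make sure you invoke the version of de~la~Vall\'ee--Poussin that only requires $\Psi$ increasing with $\Psi(s)/s\to\infty$ (this is the standard form and suffices here).
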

\begin{proof}
Given $\{u_k\}_{k\in\N}\subset M_\alpha$ such that $u_k \cde u$ let us see that $\J(u)=\mu$.

Observe that, since $g$ is increasing 
\begin{align*}
|\J(u_k)-\J(u)|&\leq \int_\Omega |G(|u_k|)-G(|u|)|\,dx\\
&\leq \int_\Omega \left(\int_{|u(x)|}^{|u(x)|+|u_k(x)-u(x)|} g(t)\,dt \right)dx\\
&\leq \int_\Omega g(|u(x)|+|u_k(x)-u(x)|)|u_k(x)-u(x)|\,dx.
\end{align*}
Gathering the inequality above with the H\"older's inequality for Young functions and    Lemma \ref{lema.aux}, we get
\begin{equation} \label{una.ecuacion}
|\J(u_k)-\J(u)| \leq  2 \| g(|u|+ |u_k-u|)\|_{\tilde G} \|u_k-u\|_G. 
\end{equation}

Define $\bar u_k(x):=|u_k(x)-u(x)|+|u(x)|$. We claim that $g(\bar u_k)$ is uniformly bounded in $L^{\tilde G}(\Omega)$. Indeed, if $\|g(\bar u_k)\|_{\tilde G}\leq 2$ there is nothing to do. Assume otherwise that $\|g(\bar u_k)\|_{\tilde G}>2$, then by the convexity of $\tilde G$ and the definition of the Luxemburg norm we have that
$$
1<\int_\Omega \tilde G\left(\frac{2g(\bar u_k)}{\|g(\bar u_k)\|_{\tilde G}} \right)\,dx \leq \frac{2}{\|g(\bar u_k)\|_{\tilde G}} \int_\Omega \tilde G(g(\bar u_k)) \,dx,
$$
from where, using \eqref{eq.tildeg}, the fact that $g$ is increasing and  \eqref{rel} we get
\begin{align*}
\frac{1}{2} \|g(\bar u_k)\|_{\tilde G} &\leq \int_\Omega \tilde G(g(\bar u_k))\,dx\\
&\leq \int_\Omega \left( \int_0^{\bar g(u_k)} g^{-1}(\tau)\,d\tau\right)\,dx\\
&\leq \int_\Omega \bar u_k g(u_k)\,dx \leq \int_\Omega G(2\bar u_k)\,dx.
\end{align*}
Observe that, for any $\ve>0$, 
$$
\int_\Omega G(2\bar u_k)\,dx \leq \frac{1}{1+\ve}\int_\Omega G\left( (1+\ve) |u_k-u| \right)\,dx + \frac{\ve}{1+\ve}\int_\Omega G\left(\frac{(1+\ve)}{\ve} |u| \right)\,dx,
$$
then, since $u$ and $u_k-u$ belong to $W^s L^G(\Omega)$, from Lemma \ref{lema.aux.1} it follows that  $(1+\ve)|u_k-u|$ and $\frac{(1+\ve)}{\ve}|u|$ belong to $K^G(\Omega)$. Therefore giving from the previous considerations that $g(\bar u_k)$ is uniformly bounded in $L^{\tilde G}(\Omega)$.

The last assertion together with \eqref{una.ecuacion} gives that $\J(u_k)\to \J(u)$. Hence $\J(u)=\alpha$, $u\in M_\alpha$ and $M_\alpha$ is weak* closed.
\end{proof}

We are in position to prove that the minimization problem \eqref{minim.prob} has a solution.

\begin{proof}[Proof of Theorem \ref{thm.min}]
Since $G$ is increasing it follows that $M_\alpha$ is not empty. Let $\{u_k\}_{k\in\N}\subset M_\alpha$ be a minimizing sequence for \eqref{minim.prob}, that is, 
$$
\lim_{k\to \infty}   \F(u_k) = \Lambda_\alpha.
$$
From Proposition \ref{prop.coerciva} $f$ is coercive, implying  that $\{u_k\}_{k\in\N}$ is bounded in $W^s_0 L^G(\Omega)$, which is the dual of a separable Banach space. Hence, up to a subsequence, there exists $u\in W^s_0 L^G(\Omega)$ such that $u_k \cde u$ in $W^s_0 L^G(\Omega)$. Moreover, $u\in M_\alpha$ in light of Proposition \ref{prop.mmu}. Finally, by the weak* lower semicontinuity of $\F$ stated in Proposition \ref{prop.wlsc} we conclude that
$$
\F(u) \leq \liminf_{k\to\infty} \F(u_k) = \Lambda_\alpha,
$$
that is, $u$ is a solution of the constrained minimization problem \eqref{minim.prob}.

Finally, observe that $u$ can be assumed to be one-signed in $\Omega$ since if $u$ solves \eqref{minim.prob}, hence also $|u|$ does it.
\end{proof}

\section{Lagrange multipliers and the eigenvalue problem}\label{sec.lag} 

In this section we essentially follow \cite{MT} to prove Theorem \ref{thm.wsol}. The idea is to use a Lagrange multipliers type theorem to get existence of eigenvalues. Since we are not assuming the $\Delta_2$ condition, $\F$ and $\J$ are not, in general, differentiable. To overcome this, we have the following result.

\begin{prop}\label{prop.ds}
Let $u_\alpha\in W^s_0L^G(\Omega)$ be a solution of the minimization problem \eqref{eq.min}. Then $g(|D_su_\alpha|)\in L^{\tilde{G}}(\R^n\times \R^n, d\mu)$.
\end{prop}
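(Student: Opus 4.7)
The strategy is to adapt the Mustonen--Tienari approach of \cite{MT} to the nonlocal Orlicz setting, exploiting the minimization property of $u_\alpha$ to force enough regularity on $g(|D_s u_\alpha|)$. Concretely, my plan is to realize the formal G\^ateaux derivative
\[
\phi \longmapsto \iint_{\R^n\times\R^n} g(|D_s u_\alpha|)\frac{D_s u_\alpha}{|D_s u_\alpha|}\, D_s\phi\,d\mu
\]
as a bounded linear functional on $C_c^\infty(\Omega)$ equipped with the $\|\cdot\|_{s,G}$-norm. Once this is achieved, density of $C_c^\infty(\Omega)$ together with Orlicz duality on $(\R^n \times \R^n, d\mu)$ will yield the required membership $g(|D_s u_\alpha|) \in L^{\tilde G}(\R^n \times \R^n, d\mu)$.

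To carry out this program I would fix $\phi \in C_c^\infty(\Omega)$ and construct a constraint-preserving perturbation of the form $u_{t,\sigma(t)} := (1+\sigma(t))\, u_\alpha + t\phi$, where $\sigma(t) \to 0$ as $t\to 0$ is chosen---using one-signedness of $u_\alpha$ together with the continuity of $\J$ shown in Proposition \ref{prop.mmu}---so that $\J(u_{t,\sigma(t)}) = \alpha$. Applying Lemma \ref{lema.aux} to $u_\alpha$ gives $g(|u_\alpha|) \in K^{\tilde G}(\Omega) \subset L^{\tilde G}(\Omega)$, which makes the directional derivative of $\J$ at $u_\alpha$ a bounded linear functional on $L^G(\Omega)$ and furnishes the asymptotics $\sigma(t) = -t\lambda_\phi + o(t)$ with
\[
\lambda_\phi := \frac{\int_\Omega g(|u_\alpha|)\frac{u_\alpha}{|u_\alpha|}\phi\,dx}{\int_\Omega g(|u_\alpha|)|u_\alpha|\,dx}.
\]
I would then combine two pointwise convexity estimates for $G(|D_s u_{t,\sigma(t)}|)$: the subgradient lower bound
\[
G(|D_s u_{t,\sigma(t)}|) \geq G(|D_s u_\alpha|) + g(|D_s u_\alpha|)\frac{D_s u_\alpha}{|D_s u_\alpha|}\,D_s(u_{t,\sigma(t)} - u_\alpha),
\]
and the weighted-Jensen upper bound $G(\lambda a + \mu b) \leq \lambda G(a) + \mu G(b)$, which is valid whenever $\lambda + \mu \leq 1$ since $G(0) = 0$. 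Integrating both over $(\R^n\times\R^n, d\mu)$ (using Fatou to control sign issues in the lower bound) and invoking minimality $\F(u_\alpha) \leq \F(u_{t,\sigma(t)})$, then dividing by $t > 0$ and letting $t \to 0^+$, one extracts
\[
\left|\iint_{\R^n\times\R^n} g(|D_s u_\alpha|)\frac{D_s u_\alpha}{|D_s u_\alpha|}\, D_s\phi\,d\mu\right| \leq C\,\|\phi\|_{s,G}.
\]

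The hardest step is producing the upper bound: without $\Delta_2$ the functional $\F$ need not be finite on any norm-neighborhood of $u_\alpha$, so no standard G\^ateaux calculus is available, and the weighted-Jensen inequality must be applied with weights summing to at most one. Arranging this requires rewriting $u_{t,\sigma(t)}$ as a genuine convex combination of $u_\alpha$ and the bounded piece $t\phi$, which is delicate when $\lambda_\phi$ is small or of the wrong sign: one may need first to decompose $\phi$ into components parallel and orthogonal (in the $\J'$-sense) to $u_\alpha$ and possibly rescale, in order to ensure that the negative correction $\sigma(t)$ dominates $t$ in the appropriate direction. This careful bookkeeping is the true payoff of minimization, and is precisely what prevents the appearance of possibly non-integrable quantities such as $G(2|D_s u_\alpha|)$ that would otherwise arise from a naive rescaling argument.
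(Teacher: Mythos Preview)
Your overall plan---constraint-preserving perturbations, convexity inequalities, and minimality, all in the spirit of \cite{MT}---aligns with the paper, which simply defers to \cite[Proposition~4.3]{MT} with $\nabla$ replaced by $D_s$ and Lebesgue measure by $d\mu$. But the final step you propose contains a genuine gap.

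Even granting the bound
\[
\left|\iint_{\R^n\times\R^n} g(|D_s u_\alpha|)\frac{D_s u_\alpha}{|D_s u_\alpha|}\, D_s\phi\,d\mu\right| \leq C\,\|\phi\|_{s,G} \qquad\text{for all }\phi\in C_c^\infty(\Omega),
\]
Orlicz duality does \emph{not} yield $g(|D_s u_\alpha|)\in L^{\tilde G}(\R^n\times\R^n,d\mu)$. The identification $(E^G(d\mu))'=L^{\tilde G}(d\mu)$ concerns functionals on the whole of $E^G(\R^n\times\R^n,d\mu)$; your functional is tested only against the range $\{D_s\phi:\phi\in C_c^\infty(\Omega)\}$, a proper and far from dense subspace (already because such functions are antisymmetric in $(x,y)$ and arise from a function of a single variable). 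A Hahn--Banach extension is represented by \emph{some} $w\in L^{\tilde G}(d\mu)$, but nothing forces $w$ to coincide with $g(|D_s u_\alpha|)\frac{D_s u_\alpha}{|D_s u_\alpha|}$. Nor can you simply plug in $\phi=u_\alpha$: since $u_\alpha\in W^s_0L^G(\Omega)$ is only a weak$^*$ limit of smooth functions (in general not a norm limit, because $D_s u_\alpha$ need not lie in $E^G(d\mu)$), norm approximation is unavailable, and passing to the limit along a weak$^*$ sequence would already require $g(|D_s u_\alpha|)\in E^{\tilde G}(d\mu)$---stronger than what you are trying to prove.

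What is actually needed, and what \cite{MT} does, is to bound the modular $\iint\tilde G(g(|D_s u_\alpha|))\,d\mu$ \emph{directly}, equivalently $\iint|D_s u_\alpha|\,g(|D_s u_\alpha|)\,d\mu$ via the identity $\tilde G(g(t))=tg(t)-G(t)$. The same ingredients you list---minimality, the perturbation $(1-\varepsilon)u_\alpha+\delta(\varepsilon)v$ of Lemma~\ref{lem.2}, and convexity of $G$---are used, but the target is a uniform-in-$\varepsilon$ bound on the difference quotient $\varepsilon^{-1}\bigl(\F(u_\alpha)-\F((1-\varepsilon)u_\alpha)\bigr)$, whose monotone limit as $\varepsilon\to0^+$ is exactly $\iint|D_s u_\alpha|\,g(|D_s u_\alpha|)\,d\mu$. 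No functional-analytic duality enters.
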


\begin{proof}

It follows exactly as in the proof of Proposition 4.3 in \cite{MT} replacing $\nabla$ by $D_s$ and the Lebesgue measure by $d\mu$. 
\end{proof}

The following Lemma is also proved in \cite{MT}:
\begin{lem}\label{lem.2}
Let $u,v\in E^G(\Omega)$ such that $u\not\equiv0$ and $\int_\Omega g(|u|)v\,dx\neq0$. Then
\[
\int_\Omega G((1-\varepsilon)u+\delta v)\,dx=\int_\Omega G(|u|)\:dx  
\]
defines $\delta$ as a function of $\varepsilon$ on some interval $(-\varepsilon_0,\varepsilon_0)$. Moreover, $\delta$ is differentiable in $(-\varepsilon_0,\varepsilon_0)$, $\delta(0)=0$ and 
\[
\delta'(0)=\frac{\int_\Omega g(|u|)u\,dx}{\int_\Omega g(|u|)v\,dx}.
\]
\end{lem}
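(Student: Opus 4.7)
The proof is a standard application of the implicit function theorem to
$$
\Psi(\ve,\delta) := \int_\Omega G\bigl(|(1-\ve)u+\delta v|\bigr)\,dx - \int_\Omega G(|u|)\,dx,
$$
so that the equation in the statement reads $\Psi(\ve,\delta)=0$, with $\Psi(0,0)=0$. The plan is to show that $\Psi$ is $C^1$ on some neighborhood of the origin and that $\partial_\delta\Psi(0,0)\neq 0$; the implicit function theorem then yields $\delta=\delta(\ve)$, and the asserted formula for $\delta'(0)$ follows by implicit differentiation.

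The first step is to verify that $\Psi$ is finite on a neighborhood of $(0,0)$ and that the partial derivatives make sense. Here the hypothesis $u,v\in E^G(\Omega)$ is crucial: by the characterization of $E^G$ as the closure of bounded functions, one has $\int_\Omega G(c|u|)\,dx<\infty$ and $\int_\Omega G(c|v|)\,dx<\infty$ for \emph{every} $c>0$. Thus on a small square $|\ve|,|\delta|\leq \rho$, convexity of $G$ gives
$$
G\bigl(|(1-\ve)u+\delta v|\bigr)\leq \tfrac12 G\bigl(2(1+\rho)|u|\bigr)+\tfrac12 G\bigl(2\rho|v|\bigr),
$$
and the right-hand side is integrable. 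Writing $w_{\ve,\delta}:=(1-\ve)u+\delta v$, a formal differentiation gives
$$
\partial_\delta\Psi = \int_\Omega g(|w_{\ve,\delta}|)\frac{w_{\ve,\delta}}{|w_{\ve,\delta}|}\,v\,dx,\qquad
\partial_\ve\Psi = -\int_\Omega g(|w_{\ve,\delta}|)\frac{w_{\ve,\delta}}{|w_{\ve,\delta}|}\,u\,dx.
$$
To justify the exchange of differentiation and integral I would dominate the integrands, using monotonicity of $g$, by $g\bigl((1+\rho)|u|+\rho|v|\bigr)|v|$ and $g\bigl((1+\rho)|u|+\rho|v|\bigr)|u|$ respectively. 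The argument in the proof of Lemma \ref{lema.aux}, together with \eqref{rel} applied to the function $(1+\rho)|u|+\rho|v|\in E^G(\Omega)$, shows that $g\bigl((1+\rho)|u|+\rho|v|\bigr)\in L^{\tilde G}(\Omega)$, and H\"older's inequality \eqref{eq.hol} then gives integrability. Dominated convergence produces both the derivative and its continuity in $(\ve,\delta)$.

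Evaluating at $(0,0)$, $\partial_\delta\Psi(0,0)=\int_\Omega g(|u|)\frac{u}{|u|}\,v\,dx$ is nonzero by hypothesis (interpreting $u/|u|$ as the sign of $u$ in the one-signed case produced by Theorem \ref{thm.min}), while $\partial_\ve\Psi(0,0)=-\int_\Omega g(|u|)|u|\,dx$. The implicit function theorem then delivers $\ve_0>0$ and $\delta\in C^1(-\ve_0,\ve_0)$ with $\delta(0)=0$ and $\Psi(\ve,\delta(\ve))=0$, together with
$$
\delta'(0)=-\frac{\partial_\ve\Psi(0,0)}{\partial_\delta\Psi(0,0)}=\frac{\int_\Omega g(|u|)u\,dx}{\int_\Omega g(|u|)v\,dx}.
$$
The main obstacle is precisely this differentiation under the integral sign: since $G$ is not assumed to satisfy $\Delta_2$, one cannot rely on the usual inequality $G(a+b)\leq C(G(a)+G(b))$ to manufacture a dominating function. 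The remedy is exactly the standing hypothesis $u,v\in E^G(\Omega)$, which provides the dilation-invariant integrability of $G(c|u|)$ and $G(c|v|)$ needed to absorb the parameters $(\ve,\delta)$ uniformly on a small neighborhood of the origin.
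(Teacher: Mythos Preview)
Your argument is correct and is precisely the natural implicit function theorem approach. The paper does not actually supply a proof of this lemma; it simply cites \cite{MT} (Mustonen--Tienari), where the same argument is carried out in the local setting. What you wrote is essentially a reconstruction of that proof, with the right ingredients: the characterization of $E^G$ as the set of functions $w$ with $\int_\Omega G(c|w|)\,dx<\infty$ for every $c>0$ is exactly what makes the uniform domination work without any $\Delta_2$ assumption, and your use of the computation behind Lemma~\ref{lema.aux} to place $g\bigl((1+\rho)|u|+\rho|v|\bigr)$ in $L^{\tilde G}(\Omega)$ is the correct way to justify differentiation under the integral sign.

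One small remark: you correctly observed the discrepancy between the hypothesis $\int_\Omega g(|u|)\,v\,dx\neq 0$ and the quantity $\partial_\delta\Psi(0,0)=\int_\Omega g(|u|)\tfrac{u}{|u|}\,v\,dx$ that actually arises. The lemma as stated is written with the one-signed minimizer from Theorem~\ref{thm.min} in mind (so that $u/|u|$ is a constant sign), and this is how it is used in the proof of Theorem~\ref{thm.wsol}; your parenthetical note covers this.
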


With these preliminaries we are ready to prove our eigenvalue existence result.

\begin{proof}[Proof of Theorem \ref{thm.wsol}]
Let $u=u_\alpha$ be a solution of \eqref{eq.min} and let us define $\F'$ and $\J'$ as linear functionals by:
\begin{align*}
\F'(v)&:=\iint_{\R^n\times\R^n} g\left(|D_su|\right)\frac{D_su}{|D_su|}D_sv\,d\mu,\\
\J'(v)&:=\int_{\Omega}g\left(|u|\right)\frac{u}{|u|}v\,dx.
\end{align*}
Notice that Proposition \ref{prop.ds} assures that 
\[
|\F'(v)|\leq \iint_{\R^n\times \R^n} \tilde{G}\left(g(|D_su_\alpha|)\right)\,d\mu+\iint_{\R^n\times \R^n} G(|D_sv|)\,d\mu<\infty 
\]
so that $\F'$ is well defined in $W^s_0E^G(\Omega)$. Similarly, $\J'$ is well defined by Lemma \ref{lema.aux}. 

According to \cite{Zei}, Proposition 43.1, if we show that 
\begin{equation}\label{eq.kerfkerg}
\F'(v)=0\Rightarrow \J'(v)=0
\end{equation}
then there exists $\lambda=\lambda_\alpha$ such that 
\[
\F'(v)=\lambda\J'(v)
\]
and we have a weak solution of \eqref{eq.eigen} from the density of $W^s_0E^G(\Omega)$ in $W^s_0L^G(\Omega)$.\\ 
Observe that the number $\lambda$ is strictly positive, fact which  follows just by taking $u$ itself as a test function.

To show \eqref{eq.kerfkerg} it is enough to show that 
\[
\J'(v)>0\Rightarrow \F'(v)>0
\]
so let $\J'(v)>0$ for some $v\in W^s_0E^G(\Omega)$ and let us define the function
\begin{equation}\label{eq.delta}
\int_\Omega G((1-\varepsilon)u+\delta(\varepsilon)v)\,dx=\alpha.
\end{equation}
Now, according to Lemma \ref{lem.2}, $\delta\in C^1(-\varepsilon_0,\varepsilon_0)$, $\delta(0)=0$ and $\delta'(0)>0$. Hence, 
\[
\frac{1}{2}\delta'(0)\leq\delta'(\varepsilon)\leq 2\delta'(0) \text{ for } \varepsilon\in(0,\varepsilon_1)
\]  
for $\varepsilon_1<\varepsilon_0$ small enough, so integrating we get
\begin{equation}\label{eq.deltaprime}
\frac{1}{2}\delta'(0)\leq \frac{\delta(\varepsilon)}{\varepsilon}\leq 2\delta'(0)\text{ for } \quad(0,\varepsilon_1).
\end{equation}
 Let $w_\varepsilon:=(1-\varepsilon)u+\delta(\varepsilon)v$ and notice that, since $w_\varepsilon$ is admissible, 
\begin{equation}\label{eq.int}
\iint_{\R^n\times \R^n} \frac{G(|D_sw_\varepsilon|)-G(|D_su|)}{\delta(\varepsilon)}\,d\mu\geq 0\quad\text{for any}\quad\varepsilon\in(0,\varepsilon_1).
\end{equation}

We want to take the limit on \eqref{eq.int} as $\varepsilon\rightarrow0^+$. Now, since $w_\varepsilon\longrightarrow u$ a.e. as $\varepsilon\rightarrow0^+$ we have
\begin{equation}\label{eq.dsw}
D_sw_\varepsilon \longrightarrow D_su  \quad \mu-\text{a.e.}
\end{equation}
and therefore, since $G$ is a continuous function
\[
G(|D_sw_\varepsilon|)\longrightarrow G(|D_su|) \quad \mu-\text{a.e.}
\] 
Also, by using the mean value theorem
\begin{align*}
\frac{G(|D_sw_\varepsilon|)-G(|D_su|)}{\delta(\varepsilon)} & =\frac{1}{{\delta(\varepsilon)}}\frac{G(|D_sw_\varepsilon|)-G(|D_su|)}{|D_sw_\varepsilon|-|D_su|}\frac{|D_sw_\varepsilon|^2-|D_su|^2}{|D_s w_\ve| + |D_s u|}\\       & = g(\xi(\varepsilon))\left(\frac{1}{{\delta(\varepsilon)}}\frac{|D_sw_\varepsilon|^2-|D_su|^2}{|D_sw_\varepsilon|+|D_su|}\right)
\end{align*}
where
\[
g(\xi(\varepsilon))\longrightarrow g(|D_su|)\quad\text{as}\quad \varepsilon\rightarrow0^+.
\]
Next, notice that by the definition of $w_\varepsilon$ 
\[
\frac{1}{{\delta(\varepsilon)}}\left(|D_sw_\varepsilon|^2-|D_su|^2\right)=\frac{\varepsilon(\varepsilon-2)}{\delta(\varepsilon)}|D_su|^2+2(1-\varepsilon)D_suD_sv+\delta(\varepsilon)D_sv.
\]
This, \eqref{eq.deltaprime} and \eqref{eq.dsw} give 
\[
\frac{1}{{\delta(\varepsilon)}}\frac{|D_sw_\varepsilon|^2-|D_su|^2}{|D_sw_\varepsilon|+|D_su|} \longrightarrow\frac{-1}{\delta'(0)}|D_su|+\frac{D_suD_sv}{|D_su|}
\]
 hence
\begin{equation}\label{eq.conv}
\frac{G(|D_sw_\varepsilon|)-G(|D_su|)}{\delta(\varepsilon)} \longrightarrow g(|D_su|)\left(\frac{-1}{\delta'(0)}|D_su|+\frac{D_suD_sv}{|D_su|}\right)
\end{equation}
$\mu$-a.e. as $\varepsilon\rightarrow0^+$. Furthermore
\begin{align*}
\left|\frac{G(|D_sw_\varepsilon|)-G(|D_su|)}{\delta(\varepsilon)}\right| & \leq \left(g(|D_sw_\varepsilon|)+g(|D_su|)\right)\frac{|D_sw_\varepsilon-D_su|}{\delta(\varepsilon)} \\  
& \leq \left(2g(|D_su|)+g\left(\frac{\delta(\varepsilon)}{\varepsilon}|D_sv|\right)\right)\left(\frac{\varepsilon}{\delta(\varepsilon)}|D_su|+|D_sv|\right) \\  
& \leq \left(2g(|D_su|)+g\left(2\delta'(0)|D_sv|\right)\right)\left(\frac{2}{\delta'(0)}|D_su|+|D_sv|\right) 
\end{align*}
and this function (again using Proposition \ref{prop.ds}) belongs to $L^1(\R^n\times \R^n, d\mu)$.\\
Then, by the Dominated Convergence Theorem, \eqref{eq.conv} holds in $L^1(\R^n\times \R^n, d\mu)$, and recalling \eqref{eq.int} we get 
\[
\iint_{\R^n\times \R^n} g(D_su)\frac{D_suD_sv}{|D_su|}\,d\mu\geq \frac{1}{\delta'(0)}\iint_{\R^n\times \R^n} g(D_su)D_su\,d\mu>0,
\]
that is, $\F'(v)>0$, which concludes the proof.
\end{proof}  

\section{Some properties of the spectrum}\label{sec.spec}

In this section we proof the properties of the spectrum discussed in the Introduction. We start with Theorem \ref{lam.inf}.

\begin{proof}[Proof of Theorem \ref{lam.inf}]
Let  $\{\lam_k\}_{k\in\N}$ be sequence of eigenvalues of \eqref{eq.eigen} such that $\lam_k\to \lam$ and let $\{u_k\}_{k\in\N}\subset W^s_0 L^G(\Omega)$ be the corresponding sequence of associated eigenfunctions, i.e., 
\begin{equation} \label{eqj}
\iint_{\R^n\times\R^n} g(|D_s u_k|) \frac{D_s u_k}{|D_s u_k|}  D_s v\,d\mu = \lam_k \int_\Omega g(|u_k|)\frac{u_k}{|u_k|} v \qquad \forall v\in W^s_0 L^G(\Omega).
\end{equation}

Arguing as in the proof of  Proposition  \ref{prop.wlsc}, up to a subsequence,  there exists $u\in W^s_0L_G(\Omega)$ such that
\[
\begin{array}{ll}
u_k\longrightarrow u &\text{ strongly  in }L^{G}( \Omega),\\
u_k\longrightarrow u &\text{ a.e. in } \R^n.
\end{array}
\]
From the continuity of $t\mapsto g(t)\frac{t}{|t|}$ we deduce that
$$
g(|D_s u_k|)\frac{D_s u_k}{|D_s u_k|}\longrightarrow g(|D_s u|)\frac{D_s u}{|D_s u|}\quad  \text{ a.e. in } \Omega
$$
and hence, taking limit as $k\to\infty$ in \eqref{eqj} we obtain that
$$
\iint_{\R^n\times\R^n} g(|D_s u|)\frac{D_s u}{|D_s u|}   D_s v \, d\mu = \lam \int_\Omega g(|u|)\frac{u}{|u|} v \qquad \text{ for all } v\in W^s_0 L^G(\Omega)
$$
from where the proof concludes.
\end{proof}

And we can also give the proof of Corollary \ref{cor}.

\begin{proof}[Proof of Corollary \ref{cor}]
It follows immediately  from Theorems \ref{thm.wsol} and \ref{lam.inf} by taking a sequence $\{\lambda_{\alpha_k}\}_{k\in\N}$ such that
\[
\lambda_{\alpha_k}\longrightarrow\hat{\lambda}
\] 
as $k\to\infty$, concluding the proof.
\end{proof}

Next we proof our uniform bound from below for eigenvalues in thin domains:

\begin{proof}[Proof of Proposition \ref{prop.cota}]
Observe that for each $\ve>0$, since $g$ is increasing we get
$$
G((1+\ve)t)=\int_0^{(1+\ve)t} g(\tau)\,d\tau  > \int_t^{(1+\ve)t} g(\tau)\,d\tau > \ve tg(t).
$$
This, together with \eqref{rel2} gives that
$$
\ve \frac{\Phi_{s,G}(u_\alpha)}{\Phi_G((1+\ve) u_\alpha)} \leq \lam_\alpha \leq  \frac{1}{\ve}
\frac{\Phi_{s,G}((1+\ve)u_\alpha)}{\Phi_G(u_\alpha)}.
$$
In particular, by using Proposition \ref{eq.poincare} we get
$$
\lam_\alpha \geq \ve \frac{\Phi_{s,G}(u_\alpha)}{\Phi_{s,G}((1+\ve) C \d^s u_\alpha)}.
$$
Hence, when $\Omega$ is such that its diameter is small enough, let us say $\d\leq ((1+\ve)C)^{-\frac{1}{s}}$, we find that $\lam_\alpha \geq \ve$. 

Since this lower bound in independent on $\alpha$ we obtain that $\hat{\lam}\geq\ve$ on these kind of sets. 
\end{proof}

Finally, we prove Proposition \ref{prop.fk}.

\begin{proof}[Proof of Proposition \ref{prop.fk}]
The first inequality is just a consequence of the P\'olya-Szeg\"o principle stated in \cite[Theorem 3.1]{Cianchi}.

On the other hand, let $u\in M_\alpha$ and let $u^*$ be the symmetric rearrangement of $u$. Assuming that $H(t):=tg(t)$ is convex, by applying again \cite[Theorem 3.1]{Cianchi} we have that
$$
\langle (-\Delta_g)^s u^*, u^* \rangle = \Phi_{s,H} (u^*) \leq \Phi_{s,H} (u) = \langle (-\Delta_g)^s u, u \rangle,
$$
from where, since $\Phi_H(u)=\Phi_H(u^*)=\alpha$, $\lam_\alpha(B) \leq \lam_\alpha(\Omega)$.
\end{proof}

\section{Nonlinear eigenvalue problem}\label{sec.nonlinear}

In this section we show how some minor modifications of the arguments in Section \ref{sec.min} and \ref{sec.lag} lead to the existence of the nonlinear eigenvalue problem \eqref{eq.nonlinear}.

\begin{proof}[Proof of Theorem \ref{thm.nonlinear}]
Existence of a solution for the minimization problem follow exactly as in the proof of Theorem \ref{thm.min} as long as we can show the stability of the constraint (notice that the functional we are minimizing is the same). Now, thanks to Proposition \ref{embed} we have that (a subsquence of) the minimizing sequence converges to $u_0$ in $L^G(\Omega)$. Then
\begin{align*}
|\tJ(u_0)| & = |\tJ(u_0) - \tJ(u_k)| \\
		   & \leq C\int_\Omega (|u_0|+|u_k|+1)|u_0-u_k| \,dx\\
		   & \leq C\int_\Omega |u_0-u_k| \,dx \\
  		   & \leq C\|u_0-u_k\|_G \,dx.
\end{align*}
Here the constant $C$ is changing from line to line and we have used \eqref{eq.f}, the fact that boundedness in norm implies boundedness in mean (see \cite{KR}) and the H\"older inequality \eqref{eq.hol}. Since the last term of the previous inequalities goes $0$ as $k\rightarrow\infty$, we have that $\tJ(u_0)=0$ as desired. Hence, $u_0$ is a minimizer. 

Now we want to show that  $u_0$ is a weak solution of the nonlinear eigenvalue problem \eqref{eq.nonlinear} for some $\lambda\in\R$. 

Let us separate two cases: 
\begin{enumerate}

\item $f(u_0)=0$ a.e.

\item $f(u_0)\neq 0$ a.e.
\end{enumerate}
 
If $(1)$ holds, then it is not hard to see that $u_0$ must vanish identically, and it is therefore a solution to \eqref{eq.nonlinear} for any $\lambda\in\R$. 

If $(2)$, we want to proceed as in Theorem \ref{thm.wsol}; a careful look at its proof   shows that the important properties of $\delta$ is that \eqref{eq.delta} holds and that
\begin{equation}\label{eq.deltabis}
\delta\in C^1(-\varepsilon_0,\varepsilon_0),\:\delta(0)=0\text{ and }\delta'(0)>0.
\end{equation}

Now, $(2)$ allows us to find $v\in W^s_0L^G(\Omega)$ such that 
\[
\int_\Omega f(u_0)v\:dx> 0.
\]
Let then consider
\[
h(\varepsilon,\delta):=\int_\Omega F((1-\varepsilon) u_0+\delta v)\:dx
\]
so that 
\[
h(0,0)=0\quad\text{ and }  \quad\frac{\partial h}{\partial \delta}(0,0)\neq0.
\]
Then, by the Implicit Function Theorem we can write $\delta=\delta(\varepsilon)\in C^1(-\varepsilon_0,\varepsilon_0)$ and 
\[
h(\varepsilon,\delta(\varepsilon))=0\quad\text{ for }\varepsilon\in(-\varepsilon_0,\varepsilon_0)
\]
and 
\[
\delta'(0)=-\frac{\frac{\partial h}{\partial \varepsilon}(0,0)}{\frac{\partial h}{\partial \delta}(0,0)}=\frac{\int_\Omega f(u_0)u_0\,dx}{\int_\Omega f(u_0)v\,dx}>0
\]
and we get \eqref{eq.delta} and \eqref{eq.deltabis}. The rest of the proof follows as in the proof of Theorem \ref{thm.wsol}.
\end{proof}

\subsection*{Acknowledgements.} This work was partially supported by Consejo Nacional de Investigaciones Cient\'{i}ficas y T\'{e}cnicas (CONICET-Argentina).

\end{document}